\documentclass[reqno,12pt]{amsart}

\usepackage{amssymb}

\voffset   -0.20in
\hoffset   -0.40in
\textwidth  6.25in
\textheight 8.30in

\newcommand{\alp}{\alpha}
\newcommand{\bet}{\beta}
\newcommand{\gam}{\gamma}
\newcommand{\del}{\delta}
\newcommand{\eps}{\varepsilon}
\newcommand{\kap}{\kappa}

\newcommand{\Gam}{\Gamma}
\newcommand{\Ome}{\Omega}
\newcommand{\Tht}{\Theta}

\newcommand{\F}{\mathbb F}
\newcommand{\N}{\mathbb N}

\DeclareMathOperator{\Prob}{{\mathsf P}}
\DeclareMathOperator{\tr}{Tr}

\newcommand{\longc}{,\ldots,}
\newcommand{\longe}{=\dotsb=}
\newcommand{\longp}{+\dotsb+}

\newcommand{\lpr}{\left(}
\newcommand{\rpr}{\right)}
\newcommand{\lfl}{\left\lfloor}
\newcommand{\rfl}{\right\rfloor}
\newcommand{\lcl}{\left\lceil}
\newcommand{\rcl}{\right\rceil}

\newcommand{\seq}{\subseteq}
\newcommand{\stm}{\setminus}

\newcommand{\simlam}{\stackrel{t}{\sim}}

\theoremstyle{plain}
\newtheorem{theorem}{Theorem}
\newtheorem{corollary}[theorem]{Corollary}
\newtheorem{proposition}[theorem]{Proposition}
\newtheorem{lemma}[theorem]{Lemma}

\newcommand{\refl}[1]{\ref{l:#1}}
\newcommand{\reft}[1]{\ref{t:#1}}
\newcommand{\refc}[1]{\ref{c:#1}}
\newcommand{\refp}[1]{\ref{p:#1}}
\newcommand{\refs}[1]{\ref{s:#1}}
\newcommand{\refb}[1]{\cite{b:#1}}
\newcommand{\refe}[1]{\eqref{e:#1}}

\author[S.~Kopparty]{Swastik Kopparty}
\address{Computer Science and Artificial Intelligence Laboratory, MIT,
  32 Vassar Street, Cambridge, MA 02139, USA}
\email{swastik@mit.edu}

\author[V.~Lev]{Vsevolod F. Lev}
\address{Department of Mathematics, The University of Haifa at Oranim,
  Tivon 36006, Israel}
\email{seva@math.haifa.ac.il}

\author[S.~Saraf]{Shubhangi Saraf}
\address{Computer Science and Artificial Intelligence Laboratory, MIT,
  32 Vassar Street, Cambridge, MA 02139, USA}
\email{shibs@mit.edu}

\author[M.~Sudan]{Madhu Sudan}
\address{Microsoft Research, One Memorial Drive, Cambridge, MA 02142, USA}
\email{madhu@mit.edu}

\title[Kakeya-type sets in finite vector spaces]%
  {Kakeya-type sets \\ in finite vector spaces}

\makeatletter 
\newcommand{\subjclassname@NewMSC}%
  {\textup{2010} Mathematics Subject Classification}
\makeatother  
\subjclass[NewMSC]{Primary: 05B25; secondary: 51E20, 52C17.}

\keywords{Kakeya set, Kakeya problem, polynomial method, finite field.}

\begin{document}
\baselineskip=16pt

\maketitle

\begin{abstract}
For a finite vector space $V$ and a non-negative integer $r\le\dim V$ we
estimate the smallest possible size of a subset of $V$, containing a
translate of every $r$-dimensional subspace. In particular, we show that if
$K\seq V$ is the smallest subset with this property, $n$ denotes the
dimension of $V$, and $q$ is the size of the underlying field, then for $r$
bounded and $r<n\le rq^{r-1}$ we have $|V\stm K|=\Tht(nq^{n-r+1})$; this
improves previously known bounds $|V\stm K|=\Ome(q^{n-r+1})$ and
 $|V\stm K|=O(n^2q^{n-r+1})$.
\end{abstract}

\section{Introduction and summary of results.}\label{s:intro}

Given a finite vector space $V$ and a non-negative integer $r\le\dim V$, we
say that a subset $K\seq V$ is a \emph{Kakeya set of rank $r$} if it contains
a translate of every $r$-dimensional subspace of $V$; that is, for every
subspace $L\le V$ with $\dim L=r$ there exists a vector $v\in V$ such that
$v+L\seq K$. The goal of this paper is to estimate the smallest possible size
of such a set as a function of the rank $r$, the dimension $\dim V$, and the
size $q$ of the underlying field.

For a prime power $q$, by $\F_q$ we denote the finite field of order $q$.

As shown by Ellenberg, Oberlin, and Tao \cite[Proposition~4.16]{b:eot}, if
$n\ge 2$ is an integer, $q$ a prime power, and $K\seq\F_q^n$ a Kakeya set of
rank $r\in[1,n-1]$, then
  $$ |K| \ge (1-q^{1-r})^{\binom n2} q^n, $$
provided $q$ is sufficiently large as compared to $n$. Our lower bound
presents an improvement of this estimate.
\begin{theorem}\label{t:lower-bound}
If $n\ge r\ge 1$ are integers, $q$ a prime power, and $K\seq\F_q^n$ a Kakeya
set of rank $r$, then
  $$ |K| \ge \left(\frac{q^{r+1}}{q^r+q-1}\right)^n
                                         = \big(1+(q-1)q^{-r}\big)^{-n}q^n. $$
\end{theorem}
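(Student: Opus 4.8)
The plan is to run the polynomial method with multiplicities, in the spirit of Dvir and of Dvir--Kopparty--Saraf--Sudan, pushing the multiplicity to infinity so as to extract the sharp constant. Suppose toward a contradiction that $|K|<\big(q^{r+1}/(q^r+q-1)\big)^n$. Fix a large integer $m$ and put $d=\lceil\frac{q^{r+1}}{q^r+q-1}\,m\rceil-1$, so that $d(q^r+q-1)<mq^{r+1}$. The space of polynomials in $\F_q[x_1\longc x_n]$ of degree at most $d$ has dimension $\binom{n+d}{n}$, while prescribing vanishing to order $m$ at a single point imposes $\binom{n+m-1}{n}$ homogeneous linear conditions; hence a nonzero $P$ of degree at most $d$ vanishing to order $\ge m$ at every point of $K$ exists as soon as $\binom{n+d}{n}>|K|\binom{n+m-1}{n}$. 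Since $\binom{n+d}{n}/\binom{n+m-1}{n}\to\big(q^{r+1}/(q^r+q-1)\big)^n$ as $m\to\infty$ (with $d/m$ tending to $q^{r+1}/(q^r+q-1)$), the assumed bound on $|K|$ secures such a $P$ for all large $m$. Everything then reduces to forcing $P\equiv0$.

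This is where the rank-$r$ hypothesis enters. For each $r$-dimensional subspace $L\seq\F_q^n$, the set $K$ contains a translate $a_L+L$; choosing a basis $b_1\longc b_r$ of $L$, the restriction $g_L(t_1\longc t_r):=P(a_L+t_1b_1\longp t_rb_r)$ is an $r$-variate polynomial of degree at most $d$ that vanishes to order $\ge m$ at all $q^r$ points of $\F_q^r$. Its top-degree part is $P^{\mathrm{top}}(t_1b_1\longp t_rb_r)$, where $P^{\mathrm{top}}$ is the leading form of $P$, so the family $\{g_L\}$ encodes the leading form of $P$ along every $r$-subspace together with the order-$m$ vanishing inherited from $K$.

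The heart of the matter is the multiplicity count, which I expect to be the main obstacle. The relevant tool is the multiplicity Schwartz--Zippel bound $\sum_{x\in\F_q^r}\mathrm{mult}(g,x)\le(\deg g)\,q^{r-1}$ for a nonzero $r$-variate $g$. Applied crudely to $g_L$ it only yields $g_L\equiv0$ when $d<mq$, from which one can conclude at most that $P^{\mathrm{top}}$ vanishes \emph{as a function}; bridging from there to $P^{\mathrm{top}}\equiv0$ as a polynomial wastes a factor and falls short of the claimed constant. The correct route is to exploit that each Hasse derivative of $P$ of order $i$ vanishes to order $m-i$ on $K$, and to track how much vanishing these derivatives force on the leading form $P^{\mathrm{top}}$ at the directions $[b]\in\mathbb{P}^{n-1}$; feeding the resulting order of vanishing into a multiplicity Schwartz--Zippel estimate for $P^{\mathrm{top}}$ itself, and balancing it against $\deg P^{\mathrm{top}}\le d$, should force $P^{\mathrm{top}}\equiv0$ and hence $P\equiv0$ exactly in the range $d(q^r+q-1)<mq^{r+1}$. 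Here the weight $q^r+q-1$ is the delicate point: the $q^r$ reflects the $q^r$ grid points contributed by each $r$-subspace, while the surplus $q-1$ is the leading-form (at-infinity) correction, and getting this constant rather than the naive $q^r$ is precisely what the method of multiplicities buys.

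Granting the forcing statement, the nonzero $P$ produced in the first step cannot exist, a contradiction; as $m\to\infty$ the admissible range $d<\frac{q^{r+1}}{q^r+q-1}\,m$ converts into $|K|\ge\big(q^{r+1}/(q^r+q-1)\big)^n$, as desired. The whole difficulty is concentrated in the sharp leading-form analysis of the previous paragraph, everything else being the standard dimension count and restriction bookkeeping.
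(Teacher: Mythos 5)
Your setup (interpolating a nonzero $P$ of degree $d\approx\frac{q^{r+1}}{q^r+q-1}\,m$ vanishing to order $m$ on $K$, restricting to translated $r$-flats, passing to the leading form) and your target numerology $d(q^r+q-1)<mq^{r+1}$ both match the paper exactly. But there is a genuine gap: the entire content of the theorem is concentrated in what you call the ``forcing statement,'' and you do not prove it --- you write that the sharp multiplicity count ``should force'' $P^{\mathrm{top}}\equiv 0$ and then proceed ``granting the forcing statement.'' Moreover, your hint for how to prove it (tracking vanishing of derivatives of the leading form ``at the directions $[b]\in\mathbb{P}^{n-1}$'') really only works for $r=1$, where homogeneity converts the conclusion into multiplicity-$l$ vanishing of the leading form at the points of $\F_q^n$; for $r\ge 2$ it is not clear what grid you would feed into the multiplicity Schwartz--Zippel bound, and that is exactly the missing idea.

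The paper closes this gap with a two-stage application of multiplicity Schwartz--Zippel. Stage 1: for each Hasse derivative $Q=P^{(i)}$ with $w=\|i\|<l:=\lcl (qm-d)/(q-1)\rcl$, the restriction $Q(b+T_1d_1\longp T_rd_r)$ has degree $\le d-w<q(m-w)$ and vanishes to order $\ge m-w$ at all $q^r$ points of $\F_q^r$, hence is the \emph{zero polynomial}; comparing top homogeneous components yields the polynomial identity $(P_H)^{(i)}(T_1d_1\longp T_rd_r)=0$ for all $d_1\longc d_r\in\F_q^n$ and all $\|i\|<l$. (Note this is already a polynomial-level statement, so the function-versus-polynomial bridging you worry about never arises.) Stage 2 --- the device you are missing: interpret these identities as saying that $P_H$, viewed as a polynomial in $n$ variables over the rational function field $\F_q(T_1\longc T_r)$, vanishes with multiplicity at least $l$ at every point of the grid $S^n$, where $S=\{\alpha_1T_1\longp\alpha_rT_r\colon\alpha_1\longc\alpha_r\in\F_q\}$ has exactly $q^r$ elements. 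A second application of multiplicity Schwartz--Zippel, now over this function field, forces $P_H\equiv 0$ (contradicting $P\ne 0$) as soon as $d<q^r l$; combining the stage-1 threshold $l=\lcl(qm-d)/(q-1)\rcl$ with the stage-2 requirement $d<q^rl$ is precisely what produces the weight $q^r+q-1$ that you identified but could not derive. Until you supply this (or an equivalent) argument, your proposal is a correct plan rather than a proof.
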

The proofs of Theorem~\reft{lower-bound} and most of other results, discussed
in the introduction, are postponed to subsequent sections.

We notice that Theorem~ \reft{lower-bound} extends \cite[Theorem~11]{b:dkss}
and indeed, the latter result is a particular case of the former, obtained
for $r=1$. The proof of Theorem \reft{lower-bound} uses the polynomial method
in the spirit of \cite{b:dkss,b:ss}.

Using the inequality
  $$ (1+x)^{-m} \ge 1-mx;\quad x\ge 0,\ m\ge 1, $$
one readily derives
\begin{corollary}\label{c:lower-bound}
If $n\ge r\ge 1$ are integers, $q$ a prime power, and $K\seq\F_q^n$ a Kakeya
set of rank $r$, then
  $$ |K| \ge \big(1-n(q-1)q^{-r}\big)\,q^n. $$
\end{corollary}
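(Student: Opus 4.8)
The plan is to obtain this as an immediate consequence of Theorem~\reft{lower-bound}, which already furnishes the bound
$$ |K| \ge \big(1+(q-1)q^{-r}\big)^{-n} q^n; $$
all that remains is to replace the factor $\big(1+(q-1)q^{-r}\big)^{-n}$ by the linear lower bound announced just above the corollary. To this end I would apply the inequality $(1+x)^{-m}\ge 1-mx$ (valid for $x\ge 0$ and $m\ge 1$) with the substitution $x:=(q-1)q^{-r}$ and $m:=n$. The hypotheses are met because $q\ge 2$ forces $x\ge 0$, while $n\ge r\ge 1$ gives $m\ge 1$; this yields $\big(1+(q-1)q^{-r}\big)^{-n}\ge 1-n(q-1)q^{-r}$, and multiplying through by $q^n$ produces the claimed estimate.

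The only point deserving verification is the elementary inequality itself, and I expect no real obstacle there: it is precisely Bernoulli's inequality $(1+x)^a\ge 1+ax$ in the regime $a=-m\le -1$, $x\ge -1$. Alternatively, setting $f(x):=(1+x)^{-m}-(1-mx)$ one checks $f(0)=0$ and $f'(x)=m\big(1-(1+x)^{-m-1}\big)\ge 0$ for $x\ge 0$, so $f$ is nondecreasing on $[0,\infty)$ and hence nonnegative there. Since the statement amounts to this one-line linearization, there is essentially no hidden difficulty: the substantive content lives entirely in Theorem~\reft{lower-bound}, and the corollary merely trades its sharp multiplicative form for a more transparent additive one that makes the dependence on $n$, $q$, and $r$ visible at a glance.
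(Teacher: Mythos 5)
Your proposal is correct and is exactly the paper's argument: the authors likewise deduce the corollary from Theorem~\reft{lower-bound} by applying the inequality $(1+x)^{-m}\ge 1-mx$ (for $x\ge 0$, $m\ge 1$) with $x=(q-1)q^{-r}$ and $m=n$, then multiplying by $q^n$. Your verification of the elementary inequality (via the generalized Bernoulli inequality or the monotonicity of $f(x)=(1+x)^{-m}-(1-mx)$) is a fine addition, though the paper simply takes it as known.
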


To facilitate comparison between estimates, we introduce the following
terminology. Given two bounds $B_1$ and $B_2$ for the smallest size of a
Kakeya set in $\F_q^n$ (which are either both upper bounds or both lower
bounds), we say that these bounds are \emph{essentially equivalent} in some
range of $n$ and $q$ if there is a constant $C$ such that for all $n$ and $q$
in this range we have
  $$ B_1\le CB_2,\ B_2\le CB_1, $$
and also
  $$ q^n-B_1 \le C(q^n-B_2),\ q^n-B_2\le C(q^n-B_1). $$
We will also say that \emph{the estimates}, corresponding to these
bounds, are  essentially equivalent.

With this convention, it is not difficult to verify that for every fixed
$\eps>0$, the estimates of Theorem \reft{lower-bound} and Corollary
\refc{lower-bound} are essentially equivalent whenever $n\le(1-\eps)q^{r-1}$.
If $n\ge\Big(1+\frac1{q-1}\Big)\,q^{r-1}$, then the estimate of Corollary
\refc{lower-bound} becomes trivial.

Turning to the upper bounds, we present several different constructions. Some 
of them can be regarded as refined and adjusted versions of previously known
ones; other, to our knowledge, did not appear in the literature, but have
been ``in the air'' for a while.

We first present a Kakeya set construction geared towards large fields. It is
based on (i) the ``quadratic residue construction'' due to Mockenhaupt and
Tao \refb{mt} (with a refinement by Dvir, see \refb{ss}), (ii) the ``lifting
technique'' from \refb{eot}, and (iii) the ``tensor power trick''. Our
starting point is \cite[Theorem~8]{b:ss}, stating that if $n\ge 1$ is an
integer and $q$ a prime power, then there exists a rank-$1$ Kakeya set
$K\seq\F_q^n$ such that
\begin{equation}\label{e:Oterm}
  |K| \le 2^{-(n-1)}q^n+O(q^{n-1}),
\end{equation}
with an absolute implicit constant. Indeed, the proof in \refb{ss} yields the
explicit estimate
\begin{equation}\label{e:NOterm}
  |K| \le \begin{cases}
               q\left( \frac{q+1}2 \right)^{n-1} + q^{n-1}
                                                 &\text{ if $q$ is odd}, \\
               (q-1)\left( \frac q2 \right)^{n-1} + q^{n-1}
                                                 &\text{ if $q$ is even}.
          \end{cases}
\end{equation}
This can be used to construct Kakeya sets of rank higher than $1$ using an
observation of Ellenberg, Oberlin, and Tao.
\begin{lemma}[{\cite[Remark~4.19]{b:eot}}]\label{l:EOTlifting}
Let $n\ge r\ge 1$ be integers and $\F$ a field. Suppose that $K_1$ is a
rank-$1$ Kakeya set in the vector space $\F^{n-(r-1)}$, considered as a
subspace of $\F^n$, and let $K:=K_1\cup(\F^n\stm\F^{n-(r-1)})$. Then $K$ is a
Kakeya set of rank $r$ in $\F^n$.
\end{lemma}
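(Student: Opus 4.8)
The plan is to reduce the statement to the defining property of $K_1$ by projecting away the subspace $\F^{n-(r-1)}$. Writing $W:=\F^{n-(r-1)}$ and letting $\pi\colon\F^n\to\F^{r-1}$ be the projection onto the complementary $r-1$ coordinates, we have $W=\ker\pi$ and $\F^n\stm W=\pi^{-1}\lpr\F^{r-1}\stm\{0\}\rpr$. Since $K=K_1\cup(\F^n\stm W)$ and $K_1\seq W$, every point of a translate $v+L$ lying outside $W$ already lies in $K$; hence $v+L\seq K$ if and only if the slice $(v+L)\cap W$ falls inside $K_1$. So for a fixed $r$-dimensional subspace $L\le\F^n$ the whole task is to choose the translation vector $v$ so that $(v+L)\cap W\seq K_1$.

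Next I would run the dimension count. Applying rank--nullity to the restriction $\pi|_L\colon L\to\F^{r-1}$ gives $\dim(L\cap W)=r-\dim\pi(L)\ge r-(r-1)=1$, so $L\cap W$ is always at least a line. The argument then splits according to whether $\pi$ is surjective on $L$. If $\pi(L)\ne\F^{r-1}$, I would pick $u\in\F^{r-1}\stm\pi(L)$ and any $v$ with $\pi(v)=u$; then $\pi(v+L)=u+\pi(L)$ misses $0$ because $\pi(L)$ is a subspace, so $(v+L)\cap W=\est$ and trivially $(v+L)\cap W\seq K_1$.

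The remaining case is $\pi(L)=\F^{r-1}$, where the count forces $\dim(L\cap W)=1$, so $L\cap W$ is a genuine line through the origin of $W\cong\F^{n-(r-1)}$. Here I invoke the hypothesis on $K_1$: being a rank-$1$ Kakeya set in $W$, it contains a translate $w+(L\cap W)\seq K_1$ for some $w\in W$. Taking $v:=w$, one checks $(v+L)\cap W=w+(L\cap W)$ --- indeed $w\in(v+L)\cap W$, and any two points of this slice differ by a vector of $L\cap W$ --- so again $(v+L)\cap W\seq K_1$.

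In either case $v+L\seq K$, which is exactly the rank-$r$ Kakeya property. I expect no genuine obstacle here: the only substantive point is the rank--nullity computation pinning $L\cap W$ down to a line precisely when $\pi|_L$ is onto, which is what matches the rank-$1$ property of $K_1$; the rest is routine coset bookkeeping, and the degenerate case $r=1$, where $W=\F^n$ and $K=K_1$, is absorbed automatically.
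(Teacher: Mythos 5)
Your proof is correct and follows essentially the same route as the paper's (which proves a slightly more general version with $K_1$ of rank $r_1$): the same dichotomy between translates of $L$ that can be pushed off $W=\F^{n-(r-1)}$ entirely and the case where $L\cap W$ is exactly a line, followed by the same application of the rank-$1$ Kakeya property of $K_1$. Your phrasing via the projection $\pi$ and rank--nullity is just a cosmetic repackaging of the paper's dimension formula $\dim L+\dim W=\dim(L+W)+\dim(L\cap W)$, since $\pi(L)=\F^{r-1}$ is equivalent to $L+W=\F^n$.
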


Combining \refe{NOterm} with $n=2$ and Lemma \refl{EOTlifting} with $n=r+1$,
we conclude that for every $r\ge 1$ there exists a Kakeya set
$K\seq\F_q^{r+1}$ of rank $r$ such that
\begin{equation}\label{e:rr+1a}
  |K| \le \begin{cases}
               \left( 1 - \frac{q-3}{2q^r} \right) q^{r+1}
                                                 &\text{ if $q$ is odd}, \\
               \left( 1- \frac{q-1}{2q^r} \right) q^{r+1}
                                                 &\text{ if $q$ is even}.
          \end{cases}
\end{equation}
For $q=3$ this estimate is vacuous. However, replacing in this case
\refe{NOterm} with the fact that the vector space $\F_3^2$ contains a
seven-element rank-$1$ Kakeya set, we find a Kakeya set $K\seq\F_3^{r+1}$ of
rank $r$ with
\begin{equation}\label{e:rr+1b}
  |K| \le 3^{r+1} - 2
                 = \left( 1 - \frac{3-(5/3)}{2\cdot3^r} \right) 3^{r+1}.
\end{equation}
Since the product of Kakeya sets of rank $r$ is a Kakeya set of rank $r$ in
the product space, from \refe{rr+1a} and \refe{rr+1b} we derive
\begin{theorem}\label{t:final-upper}
Let $n\ge r\ge 1$ be integers and $q$ a prime power, and write
  $$ \del_q := \begin{cases}
                 3       &\text{ if $q$ is odd and $q\ge 5$}, \\
                 1       &\text{ if $q$ is even}, \\
                 \frac53 &\text{ if $q=3$}.
               \end{cases} $$
There exists a Kakeya set $K\seq\F_q^n$ of rank $r$ such that
  $$ |K| \le \left(1-\frac{q-\del_q}{2q^r} \right)%
                                      ^{\lfloor\frac{n}{r+1}\rfloor} q^n. $$
\end{theorem}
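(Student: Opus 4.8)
The plan is to assemble the desired set in $\F_q^n$ as a Cartesian power of one small rank-$r$ Kakeya set living in $\F_q^{r+1}$, padded by a full factor to absorb the residue of $n$ modulo $r+1$. First I would consolidate the base case: inspecting the three regimes in the definition of $\del_q$, the bounds \refe{rr+1a} and \refe{rr+1b} together supply, for every prime power $q$, a rank-$r$ Kakeya set $K_0\seq\F_q^{r+1}$ with
$$ |K_0|\le\lpr 1-\frac{q-\del_q}{2q^r}\rpr q^{r+1}. $$
Indeed, for odd $q\ge5$ this is the first line of \refe{rr+1a} with $\del_q=3$, for even $q$ the second line with $\del_q=1$, and for $q=3$ it is \refe{rr+1b} with $\del_q=\frac53$; these cases exhaust all prime powers.

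Next I would set $m:=\lfl n/(r+1)\rfl$ and $s:=n-(r+1)m$, so that $0\le s\le r$, and form
$$ K := \underbrace{K_0\times\dots\times K_0}_{m}\times\F_q^s\seq(\F_q^{r+1})^m\times\F_q^s=\F_q^n. $$
Its cardinality is then immediate, namely $|K|=|K_0|^m\,q^s\le\big(1-(q-\del_q)/(2q^r)\big)^m\,q^{(r+1)m+s}$, and this is exactly the asserted bound since $(r+1)m+s=n$ with $m=\lfl n/(r+1)\rfl$.

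The one step that demands genuine argument --- and hence the main obstacle --- is to verify that $K$ really is a rank-$r$ Kakeya set, i.e.\ to make precise the claim that a product of rank-$r$ Kakeya sets is again one. I would isolate two observations. First, any rank-$r$ Kakeya set $K'$ sitting in a space of dimension at least $r$ contains a translate of \emph{every} subspace of dimension at most $r$: one extends such a subspace to an $r$-dimensional one, takes the translate guaranteed by the Kakeya property, and restricts. Second, for an $r$-dimensional subspace $L$ of a product $V_1\times\dots\times V_t$, the coordinate projections $L_i:=\pi_i(L)$ satisfy $\dim L_i\le r$ and $L\seq L_1\times\dots\times L_t$, so that if each factor $K_i$ contains a translate $v_i+L_i$ then $(v_1\longc v_t)+L\seq\prod_i(v_i+L_i)\seq\prod_i K_i$. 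Applying this with the $m$ factors $K_0\seq\F_q^{r+1}$ (whose ambient dimension $r+1\ge r$ lets the first observation apply) together with the factor $\F_q^s$ (in which every subspace, having dimension $\le s\le r$, already lies in the whole space) shows that $K$ contains a translate of every $r$-dimensional subspace of $\F_q^n$. The degenerate case $n=r$, where $m=0$ and $K=\F_q^n$ itself, is covered automatically, so the argument is complete.
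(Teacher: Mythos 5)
Your proposal is correct and follows essentially the same route as the paper: the paper obtains Theorem \reft{final-upper} by combining the base-case bounds \refe{rr+1a} and \refe{rr+1b} in $\F_q^{r+1}$ with the observation that a product of rank-$r$ Kakeya sets is a rank-$r$ Kakeya set in the product space, which is precisely your construction $K_0^{\lfloor n/(r+1)\rfloor}\times\F_q^s$. The only difference is that you spell out the proof of the product property (via coordinate projections and padding by a full factor), which the paper asserts without proof.
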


We notice that if $n,r,q$, and $\del_q$ are as in Theorem \reft{final-upper}
and $n>r$, then
  $$ \left(1-\frac{q-\del_q}{2q^r} \right)^{\lfloor\frac{n}{r+1}\rfloor}
       \le 1 - \Ome\big(q^{-(r-1})\big), $$
and that the inequality
  $$ (1-x)^m \le 1-mx+(mx)^2;\quad 0\le x\le 1,\ m\ge 1 $$
shows that if $r<n\le rq^{r-1}$, then indeed
  $$ \left(1-\frac{q-\del_q}{2q^r} \right)^{\lfloor\frac{n}{r+1}\rfloor}
       \le 1 - \Ome\Big(\frac nr\,q^{-(r-1})\Big), $$
with absolute implicit constants. Therefore, we have
\begin{corollary}\label{c:relaxed}
Let $n>r\ge 1$ be integers and $q$ a prime power. There exists a Kakeya set
$K\seq\F_q^n$ of rank $r$ such that
  $$ |K| \le q^n - \Ome\big(q^{n-(r-1)}\big); $$
moreover, if $n\le rq^{r-1}$, then in fact
  $$ |K| \le q^n - \Ome\Big( \frac nr\,q^{n-(r-1)} \Big) $$
(with absolute implicit constants).
\end{corollary}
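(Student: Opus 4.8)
The plan is to derive the corollary directly from Theorem~\reft{final-upper} together with the two elementary estimates recorded just above its statement. Writing $x:=(q-\del_q)/(2q^r)$ and $m:=\lfloor n/(r+1)\rfloor$, Theorem~\reft{final-upper} furnishes a rank-$r$ Kakeya set $K\seq\F_q^n$ with $|K|\le(1-x)^m q^n$, so everything reduces to bounding the factor $(1-x)^m$ from above. Before doing so I would record two facts about $x$ and $m$. First, $\del_q<q$ for every prime power $q$ (the tightest ratio being $\del_q/q=3/5$ at $q=5$), so $q-\del_q=\Tht(q)$ with absolute constants and hence $x=\Tht(q^{-(r-1)})$; in particular $0<x<1$, since $q-\del_q<q\le q^r$. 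Second, $n>r$ forces $n\ge r+1$, so $n/(r+1)\ge1$ and therefore $m\ge1$.

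For the unconditional bound I would use only that $0\le 1-x\le1$ and $m\ge1$, which give $(1-x)^m\le 1-x=1-\Ome(q^{-(r-1)})$; multiplying through by $q^n$ yields $|K|\le q^n-\Ome(q^{n-(r-1)})$.

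For the refined bound in the range $r<n\le rq^{r-1}$ I would invoke the inequality $(1-x)^m\le 1-mx+(mx)^2$, and the crux is to control the product $mx$ from both sides. An upper estimate $mx\le\frac{n}{r+1}\cdot\frac{q}{2q^r}=\frac{n}{2(r+1)q^{r-1}}\le\frac{r}{2(r+1)}<\frac12$ (using $q-\del_q<q$, $m\le n/(r+1)$, and $n\le rq^{r-1}$) shows $(mx)^2\le\frac12 mx$, whence $(1-x)^m\le 1-\frac12 mx$. A matching lower estimate follows from the bound $\lfloor y\rfloor\ge y/2$ valid for all $y\ge1$, applied with $y=n/(r+1)$, together with $q-\del_q=\Ome(q)$: this gives $mx\ge\frac{n}{2(r+1)}\cdot\Ome(q^{-(r-1)})=\Ome\big(\frac{n}{r}\,q^{-(r-1)}\big)$, where I absorb $r+1=\Tht(r)$. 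Combining the two, $(1-x)^m\le 1-\Ome\big(\frac{n}{r}\,q^{-(r-1)}\big)$, and multiplying by $q^n$ produces the stated $|K|\le q^n-\Ome\big(\frac{n}{r}\,q^{n-(r-1)}\big)$.

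The only genuine care-point, rather than a real obstacle, is keeping the absolute constants honest uniformly over all prime powers: one must verify $q-\del_q=\Tht(q)$ across the three cases defining $\del_q$ (in particular for the small values $q=2,3$), and must handle the floor in $m=\lfloor n/(r+1)\rfloor$ carefully enough to certify both $m\ge1$ and $m=\Ome(n/r)$. Once these bookkeeping points are pinned down, both displayed bounds follow by direct substitution.
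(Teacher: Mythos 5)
Your proposal is correct and follows essentially the same route as the paper: the paper likewise deduces the corollary from Theorem~\ref{t:final-upper}, using $m\ge 1$ (from $n>r$) for the first bound and the inequality $(1-x)^m\le 1-mx+(mx)^2$ for the second, exactly as you do. Your write-up merely makes explicit the bookkeeping the paper leaves implicit (that $q-\del_q=\Tht(q)$ uniformly over all prime powers, and the two-sided control of $mx$ via $n\le rq^{r-1}$ and $\lfloor y\rfloor\ge y/2$), and these details are all accurate.
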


We remark that Corollaries \refc{lower-bound} and \refc{relaxed} give
nearly matching bounds on the smallest possible size of a Kakeya set of rank
$r$ in $\F_q^n$ in the case where $r$ is fixed, $q$ grows, and the dimension
$n$ does not grow ``too fast''.

The situation where $q$ is bounded and $n$ grows is quite different: for
$r=1$ the $O$-term in \refe{Oterm} do not allow for constructing Kakeya sets
of size $o(q^n)$, and for $r$ large the estimate of
Theorem~\reft{final-upper} is rather weak. Addressing first the case $r=1$,
we develop further the idea behind the proof of \cite[Theorem~8]{b:ss} to
show that the $O$-term just mentioned can be well controlled, making the
result non-trivial in the regime under consideration.
\begin{theorem}\label{t:quadratic}
Let $n\ge 1$ be an integer and $q$ a prime power. There exists a rank-$1$
Kakeya set $K\seq\F_q^n$ with
  $$ |K| < \begin{cases}
             2\big(1+\frac1{q-1}\big)\left( \frac{q+1}2 \right)^n
                                               &\text{ if $q$ is odd}, \\
             \frac32 \big(1+\frac1{q-1}\big)\left( \frac{2q+1}3 \right)^n
                             &\text{ if $q$ is an even power of $2$}, \\
             \frac32 \left( \frac{2(q+\sqrt q+1)}{3} \right)^n
                                   &\text{ if $q$ is an odd power of $2$}.
           \end{cases} $$
\end{theorem}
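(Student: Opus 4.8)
The plan is to refine the Mockenhaupt--Tao/Dvir construction underlying \cite[Theorem~8]{b:ss} by handling the ``horizontal'' directions recursively instead of by brute force. Write $\F_q^n=\F_q^{n-1}\times\F_q$, with a typical point denoted $(x,t)$, and split the lines to be covered according to their direction: a direction is either \emph{transverse} (last coordinate nonzero, normalised to $(v,1)$ with $v\in\F_q^{n-1}$) or \emph{horizontal} (last coordinate zero). For the transverse directions I would build the usual ``completed'' diagonal set. Fixing a single-variable polynomial $f\colon\F_q\to\F_q$, I set $b(v):=(f(v_1)\longc f(v_{n-1}))$ and include, for every $v$, the line $\{(b(v)+tv,\,t):t\in\F_q\}$; let $D_n$ be the union. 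Each such line is a translate of the subspace spanned by $(v,1)$, so $D_n$ covers every transverse direction. Its slice at height $t$ is the product $\prod_{i=1}^{n-1}V_t$, where $V_t:=\{f(s)+ts:s\in\F_q\}$ is the value set of $f+t\cdot\mathrm{id}$; hence $|D_n|\le q\,c^{\,n-1}$ as soon as $|V_t|\le c$ for all $t$.

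The horizontal directions are exactly the directions of $\F_q^{n-1}$, so they are covered by placing a recursively constructed rank-$1$ Kakeya set $K_{n-1}\seq\F_q^{n-1}$ into a single slice, say $K_{n-1}\times\{0\}$. Taking $K_n:=D_n\cup(K_{n-1}\times\{0\})$ then yields a rank-$1$ Kakeya set with $|K_n|\le q\,c^{\,n-1}+|K_{n-1}|$. This is the crucial departure from \cite{b:ss}: there the horizontal directions are absorbed by a full hyperplane at cost $q^{n-1}$, which is the source of the additive $q^{n-1}$ term in \refe{NOterm}, whereas the recursive treatment costs only $c^{\,n-1}$. Iterating from the base case $K_1=\F_q$ gives $|K_n|\le q\sum_{j=0}^{n-1}c^{\,j}=q\,\frac{c^n-1}{c-1}<\frac{q}{c-1}\,c^n$, a clean bound with no additive error. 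Substituting the three values of $c$ found below and simplifying $\frac{q}{c-1}$ then produces the three estimates of the statement (for odd powers of $2$ one bounds $\frac{q}{c-1}$ crudely by $\frac32$).

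It remains to choose $f$ so that every $V_t$ is small. For odd $q$ I would take $f(s)=-\tfrac14 s^2$; completing the square gives $f(s)+ts=t^2-\tfrac14(s-2t)^2$, so $V_t=t^2-\square$ has size exactly $\frac{q+1}2$ for every $t$, whence $c=\frac{q+1}2$. In even characteristic the squaring map is a bijection, so $f(s)=s^2$ would leave $V_0=\F_q$, reintroducing a slice of size $q^{n-1}$; I would instead use a cubic $f$. When $q$ is an even power of $2$ we have $3\mid q-1$, the cubes form a subgroup of index $3$, and an appropriate cubic makes every $V_t$ have size at most $\frac{2q+1}3=c$ by a direct count (the extremal slices being translates of the non-cubes together with $0$). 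When $q$ is an odd power of $2$ we have $3\nmid q-1$ and no such exact count is available: here $f$ must be a cubic that is \emph{not} a permutation (e.g.\ of the form $s^3+as$), and I would bound each $|V_t|$ through Weil's bound for the number of $\F_q$-points on the associated (bounded-genus) curve $f(x)+tx=f(y)+ty$. This yields $|V_t|\le\frac{2(q+\sqrt q+1)}3=c$.

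The main obstacle is precisely this last, odd-power-of-$2$ case: one must exhibit a single cubic $f$ whose \emph{every} shift $f+t\cdot\mathrm{id}$ has value set bounded by $\frac{2(q+\sqrt q+1)}3$ --- in particular none of them may degenerate to a permutation, which would again give a slice of size $q^{n-1}$ --- and the $\sqrt q$ savings have to be extracted uniformly in $t$ from the Weil estimates. The odd-$q$ case, by contrast, is essentially the classical completed-paraboloid computation, and the even-power case reduces to elementary counting with cube residues. In all three cases the only genuinely new ingredient beyond \cite{b:ss} is the recursive handling of the horizontal directions, which is what collapses the additive $q^{n-1}$ error into the geometric factor $\frac{q}{c-1}$.
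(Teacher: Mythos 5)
Your construction coincides with the paper's once the recursion is unrolled: the set $K_n=D_n\cup(K_{n-1}\times\{0\})$ unwinds to
$$ \{ (x_1\longc x_j,t,0\longc 0)\colon 0\le j\le n-1,\ t\in\F_q,\ x_1\longc x_j\in I_f(t) \}, $$
which is exactly the set of Lemma~\refl{If}, and your bound $q(c^n-1)/(c-1)$ is the paper's $\sum_{t\in\F_q}\big(|I_f(t)|^n-1\big)/\big(|I_f(t)|-1\big)$ after bounding $|I_f(t)|\le c$; the arithmetic giving the three prefactors $2\big(1+\frac1{q-1}\big)$, $\frac32\big(1+\frac1{q-1}\big)$, $\frac32$ is also correct. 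Your odd-$q$ case (completed square, $c=(q+1)/2$) and your even-power-of-$2$ case ($f(x)=x^3$, $c=(2q+1)/3$; the ``direct count'' you appeal to is Proposition~\refp{qeven}, carried out via the trace criterion of Lemma~\refl{qeq}) are likewise the paper's, so these parts are sound modulo writing out the counts.

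The genuine gap is the case where $q$ is an odd power of $2$, which you correctly single out as the main obstacle but then propose to close in a way that provably cannot work: you insist that $f$ be a cubic polynomial, e.g.\ $f(s)=s^3+as$. When $3\nmid q-1$ the cube map is a bijection of $\F_q$, and \emph{every} cubic has a shift that degenerates to a permutation. For $f(s)=s^3+as$ this is immediate: the shift $t=a$ gives $f(s)+ts=s^3$ in characteristic $2$. In general, for $f(s)=\alpha s^3+\beta s^2+\gamma s+\delta$ the substitution $s\mapsto s+\beta/\alpha$ (an affine change of variable, so harmless for value-set sizes) turns $f(s)+ts$ into $\alpha s^3+(\beta^2/\alpha+\gamma+t)s+\text{const}$, and the single choice $t=\beta^2/\alpha+\gamma$ makes this a bijection of $\F_q$. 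For that $t$ one has $V_t=\F_q$, so the slice of $D_n$ at height $t$ has size $q^{n-1}$; since $2(q+\sqrt q+1)/3<q$ for $q\ge 8$, the target bound $\frac32\big(2(q+\sqrt q+1)/3\big)^n$ is smaller than $q^{n-1}$ once $n$ is large, so the claimed estimate fails outright. No Weil-type analysis can rescue a cubic here, because the failure is exact rather than an error term. The missing idea is that $f$ must not be a cubic at all: the paper takes $f(x)=x^{q-2}+x^2$, i.e.\ $x^{-1}+x^2$ on $\F_q^\times$, and proves $|I_f(t)|\le 2(q+\sqrt q+1)/3$ uniformly in $t$ (Proposition~\refp{qodd}) by applying Weil's bound to the character sum attached to the rational function $1/\big(x(x+t)^2\big)$.
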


Theorem~\reft{quadratic} is to be compared against the case $r=1$ of
Theorem~\reft{lower-bound} showing that if $K\seq\F_q^n$ is a rank-$1$ Kakeya
set, then $|K|\ge\big(q^2/(2q-1)\big)^n$.

For several small values of $q$ the estimate of Theorem \reft{quadratic} can
be improved using a combination of the ``missing digit construction'' and the
``random rotation trick'' of which we learned from Terry Tao who, in turn,
refers to Imre Ruzsa (personal communication in both cases).

For a field $\F$, by $\F^\times$ we denote the set of non-zero elements of
$\F$.

The missing digit construction by itself gives a very clean, but rather weak
estimate.
\begin{theorem}\label{t:missing-digit}
Let $n\ge 1$ be an integer and $q$ a prime power, and suppose that
$\{e_1\longc e_n\}$ is a linear basis of $\F_q^n$. Let
\begin{align*}
  A &:= \{ \eps_1e_1\longp\eps_n e_n
                         \colon \eps_1\longc\eps_n \in \F_q^\times \}
\intertext{and}
  B &:= \{ \eps_1e_1\longp\eps_n e_n
                         \colon \eps_1\longc\eps_n \in \{0,1\} \}.
\end{align*}
Then $K:=A\cup B$ is a rank-$1$ Kakeya set in $\F_q^n$ with
  $$ |K| = (q-1)^n+2^n-1. $$
\end{theorem}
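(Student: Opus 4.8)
The plan is to treat the two assertions separately, establishing the cardinality formula by inclusion--exclusion and the Kakeya property by exhibiting, for each direction, an explicit translate contained in $K$.

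For the cardinality, I would work in coordinates relative to the basis $\{e_1\longc e_n\}$, so that $A$ is identified with $(\F_q^\times)^n$ and $B$ with $\{0,1\}^n$. Then $|A|=(q-1)^n$ and $|B|=2^n$ are immediate, and the only remaining point is to compute $|A\cap B|$. A vector lies in $A\cap B$ exactly when each of its coordinates lies in $\F_q^\times\cap\{0,1\}=\{1\}$ (here I use $q\ge 2$, so that $0\notin\F_q^\times$ while $1\in\F_q^\times$); hence $A\cap B=\{e_1\longp e_n\}$ is a single point. Inclusion--exclusion then yields $|K|=|A|+|B|-|A\cap B|=(q-1)^n+2^n-1$.

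For the Kakeya property, the task is to show that every one-dimensional subspace has a translate contained in $K$. Let $v\ne 0$ span such a subspace and let $S:=\{i\colon v_i\ne 0\}$ be its support, which is nonempty. The construction I would use is to take the basepoint $x$ with $x_i=1$ for $i\notin S$ and $x_i=0$ for $i\in S$, and to claim that $x+\langle v\rangle\seq K$. To verify this, I would look at the point $x+tv$ for $t\in\F_q$: in each coordinate $i\notin S$ its value is the constant $1$, while in each coordinate $i\in S$ its value is $tv_i$, which vanishes precisely when $t=0$. Thus for $t\ne 0$ all coordinates are nonzero and the point lies in $A$, whereas for $t=0$ all coordinates lie in $\{0,1\}$ and the point lies in $B$; in either case $x+tv\in K$.

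The argument is short because the support-based choice of $x$ decouples the two regimes cleanly: along the support the line passes through the origin, so it stays in $A$ off a single parameter value and lands in $B$ at that value, while off the support it is frozen at the value $1$, which is simultaneously admissible for $A$ and for $B$. The step most worth double-checking is precisely the compatibility of these requirements at the single ``bad'' parameter $t=0$, where the point acquires zero coordinates and must therefore be caught by $B$ rather than $A$; the choice $x_i=1$ off the support is exactly what guarantees this. Since $x$, and hence the translate $x+\langle v\rangle$, depends only on $S$, the construction is automatically consistent across the various spanning vectors of a given subspace, so no further normalization is needed.
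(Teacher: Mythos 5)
Your proposal is correct and is essentially identical to the paper's own proof: the paper also takes the basepoint $b:=\sum_{i\colon \eps_i=0}e_i$ (which is exactly your support-based choice of $x$), checks that $b\in B$ and $b+td\in A$ for $t\in\F_q^\times$, and obtains the cardinality from $A\cap B=\{e_1\longp e_n\}$ by inclusion--exclusion. No gaps; your write-up merely spells out the verifications the paper leaves to the reader.
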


Using the random rotation trick, we boost Theorem \reft{missing-digit} to
\begin{theorem}\label{t:random-rotations}
Let $n\ge 1$ be an integer and $q\ge 3$ a prime power. There exists a
rank-$1$ Kakeya set $K\seq\F_q^n$ such that
  $$ |K| < \Big( \frac q{2^{2/q}} \Big)^{n+O\big(\sqrt{n\ln q/q}\big)} $$
(with an absolute implicit constant).
\end{theorem}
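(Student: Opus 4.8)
The plan is to combine Theorem~\reft{missing-digit} with an averaging over random linear automorphisms, the idea being that a single missing-digit-type set need only be required to cover those lines whose direction is \emph{balanced} with respect to the ambient basis, since every direction can be brought into balanced position by some rotation. Concretely, I would fix a ``seed'' set $S\seq\F_q^n$ together with a family $\mathcal G$ of directions (lines through the origin) such that $S$ contains a translate of every line with direction in $\mathcal G$, and then set $K:=\bigcup_{j=1}^N\rho_j(S)$ for suitable $\rho_1\longc\rho_N\in GL_n(\F_q)$. If the rotations are chosen so that every direction of $\F_q^n$ is carried into $\mathcal G$ by at least one $\rho_j^{-1}$, then $K$ contains a translate of every line and is therefore a rank-$1$ Kakeya set with $|K|\le N\,|S|$.

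The two quantities to control are $|S|$ and $N$. For the latter I would argue probabilistically: for a fixed nonzero $v$ and a uniformly random $\rho\in GL_n(\F_q)$ the image $\rho^{-1}v$ is uniform over $\F_q^n\stm\{0\}$, so the event $\rho^{-1}v\in\mathcal G$ is governed by the distribution of the digit profile of a random vector. Choosing $\mathcal G$ to consist of the directions whose profile is typical, a Chernoff/large-deviation estimate gives $\Prob[\rho^{-1}v\in\mathcal G]\ge\delta$ with $\delta=2^{-O(\sqrt{n\ln q/q})}$; a union bound over the fewer than $q^n$ directions then shows that $N=O(\delta^{-1}n\ln q)$ independent random rotations suffice with positive probability, so such $\rho_j$ exist. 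This $N$ is subexponential and is exactly what produces the $2^{O(\sqrt{n\ln q/q})}$ slack in the exponent of the claimed bound.

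It remains to design $S$ (and the matching $\mathcal G$) so that $|S|\le(q/2^{2/q})^n$. Here I would start from the missing-digit set of Theorem~\reft{missing-digit}, whose bulk $(\F_q^\times)^n$ covers each full-support direction through a single origin line, and refine it so that the covering lines for distinct balanced directions reuse points as much as possible; optimizing the underlying digit structure (the number and placement of forbidden digits, balanced against the density of $\mathcal G$) is what should replace the naive per-coordinate factor $q-1$ by $q/2^{2/q}$. The main obstacle is precisely this optimization: a crude truncation of the missing-digit set backfires, because the resulting seed shrinks at the same large-deviation rate at which $\mathcal G$ becomes sparse, and the two factors in $N\,|S|$ then cancel to leave $(q-1)^n$ untouched. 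Extracting a genuine exponential gain requires a seed that covers its family $\mathcal G$ strictly more efficiently than one line per direction, so that $|S|/\delta$ falls below $(q-1)^n$; verifying this efficiency, and checking that the saving is the stated $2^{2/q}$ per coordinate uniformly in $q\ge3$, is the technical heart of the argument.
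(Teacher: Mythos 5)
Your outer shell --- cover the \emph{good} directions by one fixed set $K_0$, then take a union of randomly rotated copies and finish with a union bound over the fewer than $q^n$ directions --- is exactly the paper's random rotation trick, and your justification (transitivity of the $GL_n(\F_q)$-action on nonzero vectors) is sound. But the proposal has a genuine gap precisely where you flag one: you never produce a seed of size about $(q/2^{2/q})^n$, and worse, you argue that the natural route cannot work, claiming that ``a crude truncation of the missing-digit set backfires'' because the seed and the family $\mathcal G$ shrink at matched large-deviation rates. That claim is false, and the truncation you dismiss is in fact the paper's proof. With $\del:=2\sqrt{\ln q}$ and $\nu_\eps(v)$ denoting the number of coordinates of $v$ equal to $\eps$, the paper takes $D_0$ to be the \emph{popular} directions, those $d$ with $\nu_\eps(d)>n/q-\del(n/q)^{1/2}$ for every $\eps\in\F_q$, and $A_0\seq A$ to be the vectors $a$ with $\nu_1(a)>2n/q-2\del(n/q)^{1/2}$. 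For $d\in D_0$ the missing-digit starting point $b:=\sum_{i\colon\eps_i=0}e_i$ satisfies $\nu_1(b+td)=\nu_0(d)+\nu_{t^{-1}}(d)>2n/q-2\del(n/q)^{1/2}$ for every $t\in\F_q^\times$, so the whole line lies in $K_0:=B\cup A_0$. The point you missed is that typicality of $d$ forces \emph{atypicality} of the points on the covering line: a generic element of $A$ has only about $n/(q-1)$ coordinates equal to $1$, whereas the points $b+td$ have about $2n/q>n/(q-1)$ of them. Consequently the two rates you believed must cancel are attached to different events and do not trade off at all: $D_0$ has density at least $1-1/q$ (so just $n$ rotations suffice, contributing only a polynomial factor, not your $N=\delta^{-1}n\ln q$ copies), while $|A_0|\approx\binom{n}{\lfl 2n/q\rfl}(q-2)^{n-\lfl 2n/q\rfl}\approx(q/2^{2/q})^n$ up to a factor $\exp\big(O\big((n/q)^{1/2}(\ln q)^{3/2}\big)\big)$ coming from the deviation allowance. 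That last factor --- not the number of rotations --- is the true source of the $O\big(\sqrt{n\ln q/q}\big)$ slack in the exponent.

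Relatedly, your search for a seed that ``covers its family $\mathcal G$ strictly more efficiently than one line per direction'' is a red herring: the paper covers each popular direction by a single line, exactly as in Theorem \reft{missing-digit}, and the entire saving comes from the fact that all of these lines land in the one small slice $B\cup A_0$. To repair your write-up you need the specific definition of $A_0$ (excess of $1$-digits), the identity $\nu_1(b+td)=\nu_0(d)+\nu_{t^{-1}}(d)$, the entropy estimate of $|A_0|$, and the observation that the popular directions have density $1-1/q$ rather than $2^{-O(\sqrt{n\ln q/q})}$; with those in place, your rotation-and-union-bound framework does complete the proof.
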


To compare Theorems \reft{quadratic} and \reft{random-rotations} we notice
that $(q+1)/2<2^{-2/q}q$ for every integer $q\ge 4$, that
$(2q+1)/3<2^{-2/q}q$ for every integer $q\ge 5$, and that
 $2(q+\sqrt q+1)/3<2^{-2/q}q$ for every integer $q\ge 14$. Thus, for $q$
fixed and $n$ growing, Theorem \reft{quadratic} supersedes
Theorem~\reft{random-rotations} except if $q\in\{3,4,8\}$. Indeed, the remark
following the proof of Proposition~\refp{qodd}
(Section~\refs{proofs-quadratic}) shows that the value $q=8$ can be removed
from this list.

Finally, we return to constructions of Kakeya sets of rank $r\ge 2$. As
remarked above, for $r$ large the bound of Theorem~\reft{final-upper} (and
consequently, that of Corollary~\refc{relaxed}) is rather weak. The best
possible construction we can give in this regime does not take linearity into
account and is just a \emph{universal set} construction where, following
\refb{abs}, we say that a subset of a group is $k$-universal if it contains a
translate of every $k$-element subset of the group. As shown in \refb{abs},
every finite abelian group $G$ possesses a $k$-universal subset of size at
most $8^{k-1}k|G|^{1-1/k}$. In our present context the group under
consideration is the additive group of the vector space $\F_q^n$, in which
case we were able to give a particularly simple construction of universal
sets and refine slightly the bound just mentioned.
\begin{lemma}\label{l:universal}
Let $q$ be a prime power and $n,k\ge 1$ integers satisfying $k\le q^n$. There
exists a set $U\seq\F_q^n$ with
  $$ |U| = \big( 1 - \big( 1-q^{-\lfl n/k\rfl} \big)^k \big) q^n $$
such that $U$ contains a translate of every $k$-element subset of
$\F_q^n$.
\end{lemma}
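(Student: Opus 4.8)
The plan is to exhibit an explicit product-type set and verify its cardinality and universality directly. Set $m:=\lfl n/k\rfl$, so that $mk\le n$, and decompose $\F_q^n$ as an internal direct sum $W_1\oplus\dotsb\oplus W_k\oplus W_0$, where $W_1\longc W_k$ are coordinate subspaces each of dimension $m$ and $W_0$ is the complementary coordinate subspace of dimension $n-mk$. Writing a vector $x\in\F_q^n$ according to this decomposition as $x=(x_1\longc x_k,x_0)$ with $x_j\in W_j$, I would define
$$ U := \{ x\in\F_q^n \colon x_j=0 \text{ for some } j\in\{1\longc k\} \}. $$

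First I would compute $|U|$ by passing to the complement $D:=\F_q^n\stm U=\{x\colon x_j\ne 0 \text{ for all } j\in\{1\longc k\}\}$. Since the condition $x_j\ne0$ is imposed independently on each of the $k$ blocks of dimension $m$, while $x_0$ ranges freely over $W_0$, this gives $|D|=(q^m-1)^k q^{n-mk}=(1-q^{-m})^kq^n$, and hence $|U|=\big(1-(1-q^{-m})^k\big)q^n$, which is exactly the claimed value with $m=\lfl n/k\rfl$.

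The core step is the universality check, and the key point is that the number of blocks equals $k$, the size of the set to be covered. Given a $k$-element subset $S=\{s_1\longc s_k\}\seq\F_q^n$, I would write $s_i=(s_i^{(1)}\longc s_i^{(k)},s_i^{(0)})$ and define the translation vector $v$ by prescribing its blocks \emph{diagonally}: set $v_j:=-s_j^{(j)}$ for $j\in\{1\longc k\}$ and $v_0:=0$. Then for each $i$ the $i$-th block of $v+s_i$ equals $v_i+s_i^{(i)}=0$, so $v+s_i\in U$ by the defining condition; thus $v+S\seq U$, proving that $U$ contains a translate of $S$.

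I expect no serious obstacle beyond locating the construction itself: once the space is split into exactly $k$ blocks, the diagonal choice of $v$ forces the $i$-th coordinate block of the $i$-th translated point to vanish, which is precisely membership in $U$. The only points requiring care are ensuring $mk\le n$ so that $k$ genuine blocks exist (guaranteed by $m=\lfl n/k\rfl$, and harmless even in the degenerate case $m=0$, where $U=\F_q^n$) and handling the leftover subspace $W_0$, which plays no role since $v_0$ may be chosen arbitrarily.
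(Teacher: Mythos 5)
Your proof is correct and is essentially identical to the paper's: the same decomposition into $k$ blocks of dimension $\lfl n/k\rfl$ plus a leftover block, the same set $U$ (vectors vanishing on at least one block), and the same diagonal translation vector, which the paper writes as $b=-\pi_1(a_1)-\dotsb-\pi_k(a_k)$. The only cosmetic difference is that you spell out the cardinality computation and absorb the degenerate case $m=0$ into the construction, where the paper dismisses $k>n$ as trivial up front.
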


Aa an immediate consequence we have
\begin{theorem}\label{t:universal}
Let $n\ge r\ge 1$ be integers and $q$ a prime power. There exists a Kakeya
set $K\seq\F_q^n$ of rank $r$ such that
  $$ |K| \le \big( 1 - \big( 1-q^{-\lfl n/q^r\rfl} \big)^{q^r} \big) q^n. $$
\end{theorem}

Using the estimates $\lfl n/q^r\rfl> n/q^r-1$ and $(1-x)^m\ge 1-mx$ (applied
with $x=q^{-\lfl n/q^r\rfl}$ and $m=q^r$), we obtain
\begin{corollary}\label{c:universal}
Let $n\ge r\ge 1$ be integers and $q$ a prime power. There exists a Kakeya
set $K\seq\F_q^n$ of rank $r$ such that
  $$ |K| < q^{n(1-q^{-r})+r+1}. $$
\end{corollary}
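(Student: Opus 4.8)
The plan is to derive the bound directly from Theorem~\reft{universal} by two elementary estimates, exactly as flagged in the text: the combinatorial work is already done in that theorem (and ultimately in Lemma~\refl{universal}), so what remains is pure estimation. First I would invoke Theorem~\reft{universal} to fix a Kakeya set $K\seq\F_q^n$ of rank $r$ with
  $$ |K| \le \big( 1 - \big( 1-q^{-\lfl n/q^r\rfl} \big)^{q^r} \big) q^n; $$
the goal is then to bound the bracketed factor from above by a clean power of $q$.

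Next I would apply the inequality $(1-x)^m \ge 1-mx$ with $x = q^{-\lfl n/q^r\rfl}$ and $m = q^r$. This is legitimate because $0\le x\le 1$, since $\lfl n/q^r\rfl \ge 0$ forces $q^{-\lfl n/q^r\rfl}\le 1$. It yields $\big(1-q^{-\lfl n/q^r\rfl}\big)^{q^r} \ge 1 - q^{\,r-\lfl n/q^r\rfl}$, hence
  $$ 1 - \big(1-q^{-\lfl n/q^r\rfl}\big)^{q^r} \le q^{\,r-\lfl n/q^r\rfl}, $$
and therefore $|K| \le q^{\,n+r-\lfl n/q^r\rfl}$.

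Finally I would remove the floor from the exponent using $\lfl n/q^r\rfl > n/q^r - 1$, which holds strictly for every real argument. This gives
  $$ n + r - \lfl n/q^r\rfl < n - nq^{-r} + r + 1 = n(1-q^{-r}) + r + 1, $$
and since $q\ge 2$ the map $t\mapsto q^t$ is strictly increasing, so this strict exponent inequality upgrades to $|K| < q^{\,n(1-q^{-r})+r+1}$, as claimed.

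There is no genuine obstacle here; the only points deserving a moment's care are verifying that the hypothesis $0\le x\le 1$ of the Bernoulli-type inequality is met (it is, precisely because $q^{-\lfl n/q^r\rfl}\le 1$), and tracking the strictness of the floor estimate so that the conclusion comes out with a strict ``$<$'' rather than merely ``$\le$''.
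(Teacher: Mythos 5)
Your proof is correct and is precisely the argument the paper gives: the paper derives Corollary~\ref{c:universal} from Theorem~\ref{t:universal} using exactly the two estimates $(1-x)^m\ge 1-mx$ with $x=q^{-\lfl n/q^r\rfl}$, $m=q^r$, and $\lfl n/q^r\rfl > n/q^r-1$. Your added care about the hypothesis $0\le x\le 1$ and about strictness of the floor inequality (which makes the final inequality strict) is a welcome bit of explicitness that the paper leaves implicit.
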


It is not difficult to verify that Corollary~\refc{universal} supersedes
Corollary~\refc{relaxed} for $n\ge(r+2)q^r$, and that for $n$ growing,
Theorem \reft{universal} supersedes Theorem \reft{final-upper} if $r$ is
sufficiently large as compared to $q$ (roughly, $r>Cq/\log q$ with a suitable
constant $C$).

A slightly more precise version of Corollary~\refc{universal} is that there
exists a Kakeya set $K\seq\F_q^n$ of rank $r$ with
  $$ |K| \le q^{n-\lfl n/q^r\rfl+r}; $$
this is essentially equivalent to Theorem \reft{universal} provided that
$n\ge (r+1)q^r$. (On the other hand, Theorem \reft{universal} becomes trivial
if $n<q^r$.)

The remainder of the paper is mostly devoted to the proofs of
Theorems~\reft{lower-bound}, \reft{quadratic}, \reft{missing-digit}, and
\reft{random-rotations}, and Lemma~\refl{universal}. For the convenience of
the reader and self-completeness, we also prove (a slightly generalized
version of) Lemma~\refl{EOTlifting} in the Appendix. Section
\refs{conclusion} contains a short summary and concluding remarks.

\section{Proof of Theorem \reft{lower-bound}.}\label{s:proofs}

As a preparation for the proof of Theorem \reft{lower-bound}, we briefly
review some basic notions and results related to the polynomial method; the
reader is referred to \refb{dkss} for an in-depth treatment and proofs.

For the rest of this section we use multidimensional formal variables,
which are to be understood just as $n$-tuples of ``regular'' formal
variables with a suitable $n$. Thus, for instance, if $n$ is a positive
integer and $\F$ is a field, we can write $X=(X_1\longc X_n)$ and
$P\in\F[X]$, meaning that $P$ is a polynomial in the $n$ variables
$X_1\longc X_n$ over $\F$. By $\N_0$ we denote the set of non-negative
integers, and for $X$ as above and an $n$-tuple $i=(i_1\longc
i_n)\in\N_0^n$ we let $\|i\|:=i_1\longp i_n$ and $X^i:=X_1^{i_1}\dotsb
X_n^{i_n}$.

Let $\F$ be a field, $n\ge 1$ an integer, and $X=(X_1\longc X_n)$ and
$Y=(Y_1\longc Y_n)$ formal variables. To every polynomial $P$ in $n$
variables over $\F$ and every $n$-tuple $i\in\N_0^n$ there corresponds a
uniquely defined polynomial $P^{(i)}$ over $\F$ in $n$ variables such
that
  $$ P(X+Y) = \sum_{i\in\N_0^n} P^{(i)}(Y) X^i. $$
The polynomial $P^{(i)}$ is called \emph{the Hasse derivative of $P$ of order
$i$}. Notice, that $P^{(0)}=P$ (which follows, for instance, by letting
$X=(0\longc 0)$), and if $\|i\|>\deg P$, then $P^{(i)}=0$. Also, it is easy
to check that if $P_H$ denotes the homogeneous part of $P$ (meaning that
$P_H$ is a homogeneous polynomial such that $\deg(P-P_H)<\deg P$), and
$(P^{(i)})_H$ denotes the homogeneous part of $P^{(i)}$, then
$(P^{(i)})_H=(P_H)^{(i)}$.

A polynomial $P$ in $n$ variables over a field $\F$ is said to vanish at
a point $a\in\F^n$ with multiplicity $m$ if $P^{(i)}(a)=0$ for each
$i\in\N_0^n$ with $\|i\|<m$, whereas there exists $i\in\N_0^n$ with
$\|i\|=m$ such that $P^{(i)}(a)\ne0$. In this case $a$ is also said to be
a zero of $P$ of multiplicity $m$. We denote the multiplicity of zero of
a non-zero polynomial $P$ at $a$ by $\mu(P,a)$; thus, $\mu(P,a)$ is the
largest integer $m$ with the property that
  $$ P(X+a) = \sum_{i\in\N_0^n\colon \|i\|\ge m} c(i,a)X^i;
                                                       \quad c(i,a)\in \F. $$

\begin{lemma}[{\cite[Lemma~5]{b:dkss}}]\label{l:multder}
Let $n\ge 1$ be an integer. If $P$ is a non-zero polynomial in $n$
variables over the field $\F$ and $a\in\F^n$, then for any $i\in\N_0^n$
we have
  $$ \mu(P^{(i)},a) \ge \mu(P,a)-\|i\|. $$
\end{lemma}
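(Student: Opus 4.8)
The plan is to reduce the statement to two elementary facts about Hasse derivatives: how they behave under a translation of the argument, and how they act on a single monomial. First I would record the translation rule. Writing $R(X):=P(X+a)$, I claim that $R^{(i)}(X)=P^{(i)}(X+a)$ for every $i\in\N_0^n$. Indeed, expanding $P$ at the argument $X+(Y+a)$ in two ways gives $\sum_i P^{(i)}(Y+a)X^i$ on one hand and, since $R(X+Y)=P(X+Y+a)$, also $\sum_i R^{(i)}(Y)X^i$ on the other; matching the coefficients of $X^i$ (with $X$ and $Y$ treated as independent formal variables) yields $R^{(i)}(Y)=P^{(i)}(Y+a)$. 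Consequently $\mu(P^{(i)},a)=\mu(R^{(i)},0)$ and $\mu(P,a)=\mu(R,0)$, so it suffices to prove the inequality for $R$ at the origin.

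Next I would compute the Hasse derivative of a monomial. Taking $P=X^k$ with $k\in\N_0^n$, the factorization $(X+Y)^k=\sum_{j\le k}\binom kj Y^{k-j}X^j$, where $\binom kj:=\prod_\ell\binom{k_\ell}{j_\ell}$, shows straight from the defining expansion that $(X^k)^{(i)}=\binom ki X^{k-i}$ when $i\le k$ and $(X^k)^{(i)}=0$ otherwise. Since $P\mapsto P^{(i)}$ is $\F$-linear, it follows that for any $Q=\sum_k q_kX^k$ we have $Q^{(i)}=\sum_{k\ge i}q_k\binom ki X^{k-i}$; in words, passing to the $i$-th Hasse derivative lowers the total degree of every surviving monomial by exactly $\|i\|$.

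Now set $m:=\mu(P,a)=\mu(R,0)$. By definition of multiplicity every monomial of $R$ has total degree at least $m$, so by the monomial computation every monomial of $R^{(i)}$ has total degree at least $m-\|i\|$ (the monomials whose binomial coefficient vanishes in $\F$ simply drop out, which only helps). Hence $\mu(R^{(i)},0)\ge m-\|i\|$, which by the first step is precisely $\mu(P^{(i)},a)\ge\mu(P,a)-\|i\|$; the cases $m-\|i\|\le 0$ and $R^{(i)}\equiv0$ are trivial, the latter with the convention $\mu(0,\cdot)=\infty$. The whole argument is bookkeeping rather than ingenuity, so I do not expect a real obstacle; the only point demanding care is the translation rule of the first step, since it is what legitimately moves the zero to the origin, and the positive-characteristic case needs no special treatment because the binomial coefficients are simply read modulo the characteristic.
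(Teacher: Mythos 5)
The paper itself contains no proof of this lemma: it is imported verbatim from \cite[Lemma~5]{b:dkss}, and the reader is explicitly referred to that paper for proofs, so there is nothing internal to compare against. Your argument is correct and complete, and it is in essence the standard proof from that reference: the translation rule $\big(P(\cdot+a)\big)^{(i)}=P^{(i)}(\cdot+a)$ legitimately moves the point to the origin, and the monomial formula $(X^k)^{(i)}=\binom{k}{i}X^{k-i}$ (coefficients read in $\F$) shows that Hasse differentiation lowers the total degree of every surviving monomial by exactly $\|i\|$, hence lowers the minimal monomial degree --- i.e.\ the multiplicity at $0$ --- by at most $\|i\|$. The two delicate points are exactly the ones you flagged and handled: coefficient matching in $\F[X,Y]=(\F[Y])[X]$ to justify the translation rule, and the fact that binomial coefficients vanishing modulo the characteristic only delete monomials, which can only increase the multiplicity.
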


\begin{lemma}[{\cite[Proposition 10]{b:dkss}}]\label{l:dimmult}
Let $n,m\ge 1$ and $k\ge 0$ be integers, and $\F$ a field. If a finite set
$S\seq\F^n$ satisfies $\binom{m+n-1}{n}\,|S|<\binom{n+k}{n}$, then there is a
non-zero polynomial over $\F$ in $n$ variables of degree at most $k$,
vanishing at every point of $S$ with multiplicity at least $m$.
\end{lemma}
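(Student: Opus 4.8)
The plan is to prove this by a direct dimension count: I would exhibit the desired polynomial as a nonzero element of the solution space of a homogeneous linear system, arguing that the system has fewer equations than unknowns and hence admits a nontrivial solution.

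First I would fix the ambient vector space. The polynomials in $n$ variables over $\F$ of degree at most $k$ form an $\F$-vector space $W$ whose dimension equals the number of monomials $X^i$ with $\|i\|\le k$; a stars-and-bars count (number of $i\in\N_0^n$ with $i_1\longp i_n\le k$, via a slack coordinate) gives $\dim W=\binom{n+k}{n}$. I regard the coefficients of a generic element of $W$ as $\binom{n+k}{n}$ unknowns taking values in $\F$.

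Next I would translate the vanishing conditions into linear constraints. By the definition preceding the statement, a polynomial $P$ vanishes at a point $a\in\F^n$ with multiplicity at least $m$ precisely when $P^{(i)}(a)=0$ for every $i\in\N_0^n$ with $\|i\|<m$. The number of such multi-indices, i.e.\ those with $\|i\|\le m-1$, is $\binom{(m-1)+n}{n}=\binom{m+n-1}{n}$. The crucial observation is that each condition $P^{(i)}(a)=0$ is a homogeneous linear equation in the unknown coefficients of $P$: the map $P\mapsto P^{(i)}$ is $\F$-linear, since $P^{(i)}(Y)$ is read off as the coefficient of $X^i$ in the expansion $P(X+Y)=\sum_i P^{(i)}(Y)X^i$, and evaluation at the fixed point $a$ is linear as well. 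Hence requiring $P$ to vanish at every point of $S$ with multiplicity at least $m$ amounts to imposing a system of at most $\binom{m+n-1}{n}\cdot|S|$ homogeneous linear equations on the coefficients of $P$.

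Finally I would invoke the elementary fact that a homogeneous linear system with strictly fewer equations than unknowns has a nonzero solution. The hypothesis $\binom{m+n-1}{n}\,|S|<\binom{n+k}{n}$ says exactly that the number of constraints is smaller than $\dim W$, so the kernel of the associated linear map is nonzero; any nonzero element of it is a polynomial of degree at most $k$ vanishing to order at least $m$ on all of $S$, as required. I expect no serious obstacle here, as the argument is pure linear algebra; the only points requiring care are the two binomial-coefficient counts (the dimension of $W$ and the number of multiplicity conditions contributed by a single point) and the verification that those conditions are genuinely linear in the coefficients of $P$.
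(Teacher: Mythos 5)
Your proof is correct: the paper itself does not prove this lemma (it quotes it as Proposition~10 of the cited reference and defers all proofs there), and your dimension-counting argument is precisely the standard one used for that proposition --- the space of candidate polynomials has dimension $\binom{n+k}{n}$, each point of $S$ imposes the $\binom{m+n-1}{n}$ linear conditions $P^{(i)}(a)=0$ for $\|i\|<m$ (linear because $P\mapsto P^{(i)}(a)$ is an $\F$-linear functional), and a homogeneous system with fewer equations than unknowns has a nontrivial solution. Both binomial counts and the linearity verification are exactly right, so there is nothing to add.
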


Yet another lemma we need is a direct corollary of
\cite[Proposition~6]{b:dkss}.
\begin{lemma}\label{l:composition}
Let $n,r\ge 1$ be integers and $P$ a non-zero polynomial in $n$ variables
over the field $\F$, and suppose that $b,d_1\longc d_r\in\F^n$. Then for any
$t_1\longc t_r\in\F$ we have
  $$ \mu(P(b+T_1d_1\longp T_rd_r), (t_1\longc t_r))
                                        \ge \mu(P, b+t_1d_1\longp t_rd_r), $$
where $P(b+T_1d_1\longp T_rd_r)$ is a polynomial in the formal variables
$T_1\longc T_r$.
\end{lemma}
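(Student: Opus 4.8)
The plan is to prove the statement directly from the definition of multiplicity via the Hasse derivative expansion, observing along the way that it is just the affine special case of the general composition principle recorded in \cite[Proposition~6]{b:dkss}. Set $Q(T_1\longc T_r):=P(b+T_1d_1\longp T_rd_r)$ and $a:=b+t_1d_1\longp t_rd_r$, and write $m:=\mu(P,a)$. By the definition of multiplicity it suffices to show that in the expansion of $Q$ about the point $(t_1\longc t_r)$ no monomial of total degree less than $m$ occurs; equivalently, introducing shift variables $S_1\longc S_r$ and setting $T_k=t_k+S_k$, I must show that $Q(t_1+S_1\longc t_r+S_r)$ contains no term of degree $<m$ in $S=(S_1\longc S_r)$.

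First I would substitute to obtain
  $$ Q(t_1+S_1\longc t_r+S_r)=P\big(a+\textstyle\sum_{k=1}^r S_kd_k\big). $$
Writing $W:=\sum_{k=1}^r S_kd_k\in\F[S]^n$ for the $n$-tuple whose $\ell$-th entry is the linear form $W_\ell=\sum_k S_k(d_k)_\ell$, and applying the defining identity for the Hasse derivatives of $P$ (with $Y=a$ and $X=W$), I get
  $$ P(a+W)=\sum_{i\in\N_0^n}P^{(i)}(a)\,W^i. $$

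The key observation is that each $W^i=\prod_\ell W_\ell^{i_\ell}$ is a homogeneous polynomial of degree exactly $\|i\|$ in the variables $S$, since every $W_\ell$ is homogeneous of degree $1$ in $S$. Because $m=\mu(P,a)$, we have $P^{(i)}(a)=0$ whenever $\|i\|<m$; hence every surviving summand $P^{(i)}(a)\,W^i$ is homogeneous of degree $\|i\|\ge m$ in $S$. Consequently $Q(t_1+S_1\longc t_r+S_r)$ has no monomial of degree $<m$ in $S$, which is exactly the required vanishing of all Hasse derivatives $Q^{(j)}(t_1\longc t_r)$ with $\|j\|<m$. This yields $\mu(Q,(t_1\longc t_r))\ge m=\mu(P,a)$, as claimed.

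There is no genuine obstacle here: the argument is pure degree bookkeeping. The only point requiring care is the homogeneity claim, namely that substituting the degree-$1$ forms $W_\ell$ into a monomial $X^i$ of degree $\|i\|$ produces something homogeneous of degree exactly $\|i\|$ in $S$, and in particular cannot lower the degree. Once this is granted, the vanishing of the low-order Hasse derivatives of $P$ at $a$ immediately forces the low-degree terms of $Q$ at $(t_1\longc t_r)$ to vanish, giving the stated inequality.
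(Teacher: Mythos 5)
Your proof is correct. It is worth pointing out that the paper does not actually prove Lemma~\ref{l:composition} at all: it is introduced as ``a direct corollary of \cite[Proposition~6]{b:dkss}'', i.e.\ the general composition law for multiplicities under substitution of polynomial tuples, and no argument appears in the text. Your write-up therefore supplies what the paper omits, namely a self-contained proof of the affine special case: expand $P$ about $a=b+t_1d_1\longp t_rd_r$ via the defining identity of Hasse derivatives, substitute $X\mapsto W$ where each $W_\ell$ is a linear form in $S$, and observe that $W^i$ is either zero or homogeneous of degree exactly $\|i\|$ in $S$, so the vanishing of $P^{(i)}(a)$ for $\|i\|<m$ leaves no $S$-monomial of degree below $m$. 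All steps are legitimate: the Hasse identity is a polynomial identity in $\F[X,Y]$, so applying the evaluation homomorphism $Y\mapsto a$, $X\mapsto W$ preserves it, and uniqueness of coefficients converts ``no low-degree monomials'' into the vanishing of the low-order Hasse derivatives of $Q$ at $(t_1\longc t_r)$. One pedantic caveat: the paper defines $\mu$ only for non-zero polynomials, so in the degenerate case where $P(b+T_1d_1\longp T_rd_r)$ vanishes identically one needs the usual convention $\mu(0,\cdot)=\infty$; your argument (which proves the absence of monomials of degree $<m$ regardless) covers that case. In short, your route buys self-containedness and shows the affine case is elementary degree bookkeeping, while the paper's citation buys brevity by importing the stronger general substitution principle, whose full strength is not needed here.
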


The multiplicity Schwartz-Zippel lemma is as follows.
\begin{lemma}[{\cite[Lemma 8]{b:dkss}}]
Let $n\ge 1$ be an integer, $P$ a non-zero polynomial in $n$ variables over a
field $\F$, and $S\seq\F$ a finite set. Then
  $$ \sum_{z\in S^n} \mu(P,z) \le \deg P \cdot |S|^{n-1}. $$
\end{lemma}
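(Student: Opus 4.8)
The plan is to prove the statement by induction on $n$, as it is the multiplicity analogue of the Schwartz--Zippel lemma. For the base case $n=1$ a non-zero univariate polynomial of degree $d$ has at most $d$ zeros counted with multiplicity, so $\sum_{z\in S}\mu(P,z)\le d=\deg P\cdot|S|^0$. For the inductive step I would single out one variable, say the last, and write $P=\sum_{i=0}^t P_i(x')x_n^i$, where $x'=(x_1\longc x_{n-1})$, $t=\deg_{x_n}P$, and $P_t\ne 0$ is the leading coefficient; note that $\deg P_t\le\deg P-t$. Splitting the sum as $\sum_{z\in S^n}\mu(P,z)=\sum_{b\in S^{n-1}}\sum_{a\in S}\mu(P,(b,a))$, I would aim to bound each inner sum over a fixed ``column'' $\{b\}\times S$.

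The easy case is the one in which the restriction $p_b(x_n):=P(b,x_n)$ is not identically zero. Then $p_b$ has degree at most $t$, and Lemma~\refl{composition}, applied to the line through $(b,a)$ in the direction of the last coordinate, gives $\mu(p_b,a)\ge\mu(P,(b,a))$, so that $\sum_{a\in S}\mu(P,(b,a))\le\sum_{a\in S}\mu(p_b,a)\le t$. The main obstacle is precisely the \emph{degenerate columns}, those $b$ for which $p_b\equiv 0$ (equivalently $P_i(b)=0$ for all $i$): here the restriction carries no information, yet $P$ may vanish to high order all along the line, so a naive bound fails.

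To overcome this I would establish the uniform estimate $\sum_{a\in S}\mu(P,(b,a))\le t+|S|\,\mu(P_t,b)$, valid for \emph{every} $b$. Writing $\mu_b:=\mu(P_t,b)$, the definition of multiplicity furnishes a multi-index $j^\ast\in\N_0^{n-1}$ with $\|j^\ast\|=\mu_b$ and $P_t^{(j^\ast)}(b)\ne 0$. The point of this choice is that the Hasse derivative $Q:=P^{(j^\ast,0)}=\sum_i P_i^{(j^\ast)}(x')x_n^i$ has leading $x_n$-coefficient $P_t^{(j^\ast)}$, which does not vanish at $b$; hence $q_b(x_n):=Q(b,x_n)$ is a non-zero univariate polynomial of degree exactly $t$. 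Now Lemma~\refl{multder} gives $\mu(Q,(b,a))\ge\mu(P,(b,a))-\mu_b$, while Lemma~\refl{composition} gives $\mu(q_b,a)\ge\mu(Q,(b,a))$, so summing over $a\in S$ yields $\sum_{a\in S}\mu(P,(b,a))-|S|\,\mu_b\le\sum_{a\in S}\mu(q_b,a)\le\deg q_b=t$, which is exactly the desired inequality (and for non-degenerate $b$ it specializes to the easy bound, with $\mu_b=0$ and $Q=P$).

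With this in hand the induction closes cleanly: summing over $b\in S^{n-1}$ and applying the induction hypothesis to the non-zero $(n-1)$-variable polynomial $P_t$, one gets
$$ \sum_{z\in S^n}\mu(P,z)\le\sum_{b\in S^{n-1}}\big(t+|S|\,\mu(P_t,b)\big) \le t\,|S|^{n-1}+|S|\cdot\deg(P_t)\,|S|^{n-2} \le\deg P\cdot|S|^{n-1}, $$
using $\deg P_t\le\deg P-t$. I expect the genuinely delicate point to be the treatment of the degenerate columns, and in particular the realization that one should differentiate in the $x'$-variables exactly $\mu_b$ times along an index that revives the leading coefficient, so that control is regained through Lemmas~\refl{multder} and~\refl{composition} rather than through the (now useless) direct restriction.
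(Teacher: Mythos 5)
The paper itself contains no proof of this lemma: it is imported verbatim from \cite[Lemma~8]{b:dkss}, with the reader referred there for proofs. Your argument is correct and complete --- the base case, the identity $P^{(j^\ast,0)}=\sum_i P_i^{(j^\ast)}(x')x_n^i$, the uniform column bound $\sum_{a\in S}\mu(P,(b,a))\le t+|S|\,\mu(P_t,b)$ obtained from Lemmas~\refl{multder} and~\refl{composition}, and the final summation all check out --- and it is in essence the same induction-on-$n$ proof given in the cited reference, including its key idea of differentiating in the $x'$-variables by a multi-index witnessing $\mu(P_t,b)$ so that the degenerate columns are controlled by the inductive bound on $P_t$.
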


\begin{corollary}\label{c:sbe}
Let $n\ge 1$ be an integer, $P$ a non-zero polynomial in $n$ variables over a
field $\F$, and $S\seq\F$ a finite set. If $P$ vanishes at every point of
$S^n$ with multiplicity at least $m$, then
 $\deg P\ge m|S|$.
\end{corollary}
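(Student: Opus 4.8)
The plan is to read off the corollary directly from the multiplicity Schwartz-Zippel lemma stated just above it. First I would unwind the hypothesis: saying that $P$ vanishes at every point of $S^n$ with multiplicity at least $m$ means precisely that $\mu(P,z)\ge m$ for each $z\in S^n$. Since there are $|S|^n$ points in $S^n$, summing this pointwise bound gives
  $$ \sum_{z\in S^n}\mu(P,z)\ge m|S|^n. $$

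Next I would invoke the multiplicity Schwartz-Zippel lemma, which bounds the very same sum from above by $\deg P\cdot|S|^{n-1}$. Chaining the two inequalities yields
  $$ m|S|^n\le\sum_{z\in S^n}\mu(P,z)\le\deg P\cdot|S|^{n-1}, $$
and dividing through by $|S|^{n-1}$ produces $\deg P\ge m|S|$, as claimed. This division is legitimate whenever $S$ is non-empty; the degenerate case $S=\est$ is harmless, since then the desired conclusion reads $\deg P\ge 0$, which holds trivially.

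I do not anticipate any genuine obstacle: the statement is a one-line consequence of the lemma, the only bookkeeping being the factor $|S|^n$ coming from the cardinality of the grid $S^n$ and the trivial treatment of $|S|=0$. The substantive content lives entirely in the multiplicity Schwartz-Zippel lemma itself, which is quoted from \refb{dkss} and may be taken as given.
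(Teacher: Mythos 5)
Your proposal is correct and is exactly the argument the paper intends: the corollary is stated as an immediate consequence of the multiplicity Schwartz--Zippel lemma, and your chaining of $m|S|^n\le\sum_{z\in S^n}\mu(P,z)\le\deg P\cdot|S|^{n-1}$ (with the trivial empty-set case noted) is the intended one-line deduction.
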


We are now ready to prove Theorem~\reft{lower-bound}.

\begin{proof}[Proof of Theorem \reft{lower-bound}]
Assuming that $m$ and $k$ are positive integers with
\begin{equation}\label{e:loc24}
  k < q^r \lcl \frac{qm-k}{q-1} \rcl
\end{equation}
(no typo: $k$ enters both sides!), we show first that
\begin{equation}\label{e:loc25}
  \binom{m+n-1}{n}\,|K|\ge\binom{n+k}{n},
\end{equation}
and then optimize by $m$ and $k$.

Suppose for a contradiction that \refe{loc25} fails; thus, by Lemma
\refl{dimmult}, there exists a non-zero polynomial $P$ over $\F_q$ of
degree at most $k$ in $n$ variables, vanishing at every point of $K$ with
multiplicity at least $m$.

Write $l:=\lcl \frac{qm-k}{q-1} \rcl$ and fix $i=(i_1\longc
i_n)\in\N_0^n$ satisfying $w:=\|i\|<l$. Let $Q:=P^{(i)}$, the $i$th Hasse
derivative of $P$.

Since $K$ is a Kakeya set of rank $r$, for every $d_1\longc d_r\in\F_q^n$
there exists $b\in\F_q^n$ such that $b+t_1d_1\longp t_rd_r\in K$ for all
$t_1\longc t_k\in\F_q$; hence,
  $$ \mu(P,b+t_1d_1\longp t_rd_r)\ge m,  $$
and therefore, by Lemma \refl{multder},
  $$ \mu(Q,b+t_1d_1\longp t_rd_r)\ge m-w $$
whenever $t_1\longc t_r\in\F_q$. By Lemma \refl{composition}, we have
  $$ \mu(Q,b+t_1d_1\longp t_rd_r) \le
              \mu(Q(b+T_1d_1\longp T_rd_r),(t_1\longc t_r)), $$
where $Q(b+T_1d_1\longp T_rd_r)$ is considered as a polynomial in the
variables $T_1\longc T_r$. Thus, for every $d_1\longc d_r\in\F_q^n$ there
exists $b\in\F_q^n$ such that $Q(b+T_1d_1\longp T_rd_r)$ vanishes with
multiplicity at least $m-w$ at each point $(t_1\longc t_r)\in\F_q^r$.
Compared with
  $$ \deg Q(b+T_1d_1\longp T_rd_r) \le \deg Q\le k-w < q (m-w) $$
(as it follows from $w<l$), in view of Corollary~\refc{sbe} this shows that
$Q(b+T_1d_1\longp T_rd_r)$ is the zero polynomial.

Let $P_H$ and $Q_H$ denote the homogeneous parts of the polynomials $P$ and
$Q$, respectively, so that $Q(b+T_1d_1\longp T_rd_r)=0$ implies
$Q_H(T_1d_1\longp T_rd_r)=0$. Thus, $(P_H)^{(i)}(T_1d_1\longp T_rd_r)=0$ for
all $d_1\longc d_r\in\F_q^n$. We interpret this saying that $(P_H)^{(i)}$,
considered as a polynomial in $n$ variables over the field of rational
functions $\F_q(T_1\longc T_r)$, vanishes at every point of the set
  $$ \{T_1d_1\longp T_rd_r\colon d_1\longc d_r\in\F_q^n\} = S^n, $$
where
  $$ S := \{\alp_1T_1\longp\alp_r T_r\colon \alp_1\longc\alp_r\in\F_q\}. $$

This shows that all Hasse derivatives of $P_H$ of order, smaller than $l$,
vanish on $S^n$; in other words, $P_H$ vanishes with multiplicity at least
$l$ at every point of $S^n$. Since, on the other hand, by \refe{loc24} we
have
  $$ \deg P_H = \deg P \le k < q^rl = |S|l, $$
from Corollary \refc{sbe} we conclude that $P_H$ is the zero polynomial,
which is wrong as the homogeneous part of a non-zero polynomial is non-zero.

Thus, \refe{loc25} is established. Rewriting it as
  $$ |K| \ge \frac{(k+1)(k+2)\ldots(k+n)}{m(m+1)\ldots(m+n-1)}, $$
to optimize we choose $k=Nq^{r+1}-1$ and $m=(q^r+q-1)N$, where $N$ is a
positive integer. With this choice, inequality \refe{loc24} is satisfied for
any values of $N$, and the assertion of Theorem \reft{lower-bound} follows
from the observation that the limit of the right-hand side as $N\to\infty$ is
$(q^{r+1}/(q^r+q-1))^n$.
\end{proof}

\section{Proof of Theorem \reft{quadratic}.}\label{s:proofs-quadratic}

For a field $\F$, a function $f\colon\F\to\F$, and an element $ t\in\F$, we
write
  $$ I_f( t) := \{ f(x)+ t x\colon x\in\F \}. $$

Our proof of Theorem \reft{quadratic} relies on the following lemma, a
provisional form of which is implicitly contained in \refb{ss}.
\begin{lemma}\label{l:If}
Let $n\ge 1$ be an integer, $\F$ a finite field, and $f\colon\F\to\F$ a
non-linear function. There exists a rank-$1$ Kakeya set $K\seq\F^n$ with
  $$ |K| = \sum_{ t\in\F} \frac{|I_f( t)|^n-1}{|I_f( t)|-1}. $$
\end{lemma}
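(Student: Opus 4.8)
The plan is to construct $K$ explicitly as a union of lines, one chosen for each direction, and then to read off its cardinality from the product structure of the sets $I_f(t)$. Since a rank-$1$ Kakeya set is exactly a set containing a full line $b+\F d$ for every direction $0\ne d\in\F^n$, it suffices to specify, for each $d$, a base point $b=b(d)$ so that the entire line lies in $K$, and then to count $K=\bigcup_d\{b(d)+sd\colon s\in\F\}$.

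First I would treat the directions whose first coordinate is nonzero, normalized to $d=(1,t_2\longc t_n)$. For these I take the base point $b=(0,f(t_2)\longc f(t_n))$, so that the line is $\{(s,\,f(t_2)+t_2s\longc f(t_n)+t_ns)\colon s\in\F\}$. The key point is that its $i$-th coordinate $f(t_i)+t_is$ lies in $I_f(s)$ for every $s$ (take $x=t_i$ in the definition of $I_f$); hence every point $(x,v)$ of this line satisfies $v\in I_f(x)^{\,n-1}$. Letting $d$ range over all such directions, the union of these lines is exactly the ``graph'' set $\bigcup_{x\in\F}\{x\}\times I_f(x)^{n-1}$, whose size is $\sum_{x\in\F}|I_f(x)|^{n-1}$ --- precisely the top term $\sum_t|I_f(t)|^{n-1}$ of the claimed geometric series. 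The remaining directions have first coordinate $0$ and therefore span the hyperplane $\{x_1=0\}\cong\F^{n-1}$; I would cover them by induction on $n$, placing a rank-$1$ Kakeya set for $\F^{n-1}$ inside this hyperplane. Writing the target as $\sum_t\sum_{j=0}^{n-1}|I_f(t)|^j=\sum_t\frac{|I_f(t)|^n-1}{|I_f(t)|-1}$, the induction hypothesis supplies the lower-order terms, the base case $n=1$ being $K=\F$ with $|K|=\sum_t1=q$.

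The step I expect to be the real obstacle is getting the count to be an exact equality rather than merely an upper bound. The graph piece and the inductively placed hyperplane piece can overlap along their common slice $\{x_1=0\}$, namely in $\{0\}\times\lpr I_f(0)^{n-1}\cap K_{n-1}\rpr$, and any such overlap would strictly decrease $|K|$ below the stated value. To obtain the claimed formula one must therefore arrange the strata to be genuinely disjoint --- for instance by embedding the recursive copy of $K_{n-1}$ in a suitable affine hyperplane, or via an explicit injective encoding of a triple $(t,j,a)$ with $t\in\F$, $0\le j\le n-1$, $a\in I_f(t)^j$, into $\F^n$ chosen so that the levels occupy disjoint regions --- and then to re-verify that the Kakeya property survives this relayout, i.e. that each direction still carries a full line. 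Once disjointness is secured, the size is the disjoint sum $\sum_t\sum_{j=0}^{n-1}|I_f(t)|^j$, while the Kakeya verification above and the per-stratum counting are routine. I would accordingly spend most of the effort on pinning down the disjoint layout and confirming it does not destroy the covering property.
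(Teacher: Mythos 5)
Your construction is the paper's construction in mirror image: the paper defines, in one stroke,
$K=\{(x_1\longc x_j,t,0\longc 0)\colon 0\le j\le n-1,\ t\in\F,\ x_1\longc x_j\in I_f(t)\}$,
whose top stratum $j=n-1$ is exactly your ``graph'' set (with the distinguished coordinate placed last rather than first), and whose strata $j<n-1$ are exactly your recursion unrolled; for a direction whose last nonzero coordinate is at position $j+1$ (normalized to $1$) it uses the base point $b=(f(d_1)\longc f(d_j),0\longc 0)$, which is your choice of line up to the same reflection. So your approach, your verification of the covering property, and your count of the graph piece all coincide with what is in the paper.

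The one place you diverge is instructive: the overlap obstacle you flagged is genuine, and the paper does not overcome it --- it silently ignores it. The paper computes $|K|=\sum_{j=0}^{n-1}\sum_{t\in\F}|I_f(t)|^j$ as if its strata were pairwise disjoint, but they are not: for any $a\in I_f(0)$ (the image of $f$), the point $(a,0\longc 0)$ lies both in the stratum $j=0$, $t=a$ and in the stratum $j=1$, $t=0$, so for $n\ge 2$ the paper's set actually satisfies $|K|<\sum_{t\in\F}\bigl(|I_f(t)|^n-1\bigr)/\bigl(|I_f(t)|-1\bigr)$; the same collision appears in your layout as the slice $\{0\}\times I_f(0)$ already for $n=2$. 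In other words, there is no disjoint re-layout hidden in the paper for you to reconstruct, and the equality in Lemma \refl{If} should really be read as the inequality $|K|\le\sum_{t\in\F}\bigl(|I_f(t)|^n-1\bigr)/\bigl(|I_f(t)|-1\bigr)$; this weaker form is all that is ever used (the lemma enters only as an upper bound in the deduction of Theorem \reft{quadratic} and in the $q=8$ remark). If one insists on literal equality, one can note that any superset of a rank-$1$ Kakeya set is again one and pad the constructed set with arbitrary extra points up to the stated size, provided that size does not exceed $q^n$; but the clean fix is to weaken ``$=$'' to ``$\le$''. So do not sink further effort into the disjointness step: your proof of the upper bound is complete, and on this point your write-up is more careful than the paper's own argument.
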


\begin{proof}
Let
  $$ K := \{ (x_1\longc x_j, t,0\longc 0)\colon 0\le j\le n-1,
                  \  t\in\F,\ x_1\longc x_j\in I_f( t) \}. $$
Since $f$ is non-linear, we have $|I_f( t)|>1$ for each $ t\in\F$, and it
follows that
  $$ |K| = \sum_{j=0}^{n-1} \sum_{ t\in\F} |I_f( t)|^j
               =  \sum_{ t\in\F} \frac{|I_f( t)|^n-1}{|I_f( t)|-1}. $$

To show that $K$ is a rank-$1$ Kakeya set we prove that it contains a line in
every direction $d=(d_1\longc d_n)\in\F^n\stm\{0\}$. Without loss of
generality we assume that, for some $j\in[1,n-1]$, we have $d_{j+1}=1$ and
$d_{j+2}\longe d_n=0$, and we let
  $$ b := (f(d_1)\longc f(d_j),0\longc 0). $$
For every $ t\in\F$ we have then
  $$ b + td = (f(d_1)+ td_1\longc f(d_j)+td_j, t,0\longc 0) \in K, $$
completing the proof.
\end{proof}

The assertion of Theorem~\reft{quadratic} for $q$ odd follows immediately
from Lemma \refl{If} upon choosing $\F:=\F_q$ and $f(x):=x^2$, and observing
that then $|I_f( t)|=(q+1)/2$ for each $ t\in\F$ in view of
  $$ x^2+ t x = (x+ t/2)^2 -  t^2/4. $$
In the case of $q$ even the assertion follows easily by combining
Lemma~\refl{If} with the following two propositions.

\begin{proposition}\label{p:qeven}
Suppose that $q$ is an even power of $2$ and let $f(x):=x^3\ (x\in\F_q)$.
Then for every $ t\in\F_q$ we have $|I_f( t)|\le(2q+1)/3$.
\end{proposition}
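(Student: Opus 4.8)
The plan is to study the fibers of the map $g\colon\F_q\to\F_q$, $g(x):=x^3+tx$, since $I_f(t)=g(\F_q)$ and $|I_f(t)|$ is simply the number of nonempty fibers. The key arithmetic input is that an even power of $2$ satisfies $q\equiv1\pmod3$, so $3\mid q-1$; consequently $\F_q$ contains a primitive cube root of unity, squaring is a bijection of $\F_q$ (characteristic $2$), and cubing is three-to-one on $\F_q^\times$. The case $t=0$ is then immediate: $g(x)=x^3$, so $I_f(0)$ consists of $0$ together with the cubes, whence $|I_f(0)|=1+(q-1)/3=(q+2)/3\le(2q+1)/3$. For $t\ne0$ the substitution $x=\sqrt t\,u$ turns $g(x)$ into $t^{3/2}(u^3+u)$, so that $|I_f(t)|$ equals the image size of $h(u):=u^3+u$ and is the same for every $t\ne0$; thus it suffices to analyze one nonzero value of $t$.

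Next I would count collisions. Working in characteristic $2$ and factoring,
$$ g(x_1)+g(x_2)=(x_1+x_2)\big(x_1^2+x_1x_2+x_2^2+t\big), $$
so that $g(x_1)=g(x_2)$ with $x_1\ne x_2$ is equivalent to $x_1^2+x_1x_2+x_2^2=t$. Since $u^2+u+1$ has a root in $\F_q$ (a primitive cube root of unity), the binary form $x_1^2+x_1x_2+x_2^2$ is a hyperbolic plane, and hence for $t\ne0$ the equation $x_1^2+x_1x_2+x_2^2=t$ has exactly $q-1$ solutions. (Equivalently this follows by homogeneity from the count $2q-1$ at $t=0$ together with $\sum_t N(t)=q^2$.) Removing the single diagonal solution $x_1=x_2=\sqrt t$ leaves $C:=q-2$ ordered pairs $(x_1,x_2)$ with $x_1\ne x_2$ and $g(x_1)=g(x_2)$.

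To convert this count into the image size I would pin down the distribution of fiber sizes. Since $g'(x)=x^2+t$ has the unique root $\sqrt t$, and $g(\sqrt t)=\sqrt t\,(t+t)=0$, the only value admitting a repeated preimage is $y=0$; there $x^3+tx=x(x+\sqrt t)^2$ exhibits the fiber as $\{0,\sqrt t\}$, of size exactly $2$. Every other nonempty fiber is the root set of a cubic with no repeated root, hence has size $1$ or $3$. Writing $n_i$ for the number of $y$ whose fiber has size $i$, we get $n_2=1$, while $\sum_i i\,n_i=q$ and $C=\sum_i i(i-1)n_i=2n_2+6n_3$. Solving yields $n_3=(q-4)/6$ and $n_1=q/2$ (both integers, as $q\equiv4\pmod6$), whence
$$ |I_f(t)|=n_1+n_2+n_3=\frac{q}{2}+1+\frac{q-4}{6}=\frac{2q+1}{3}. $$

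The two structural inputs feeding the final linear system are where the real content lies: the conic count, which rests on $3\mid q-1$ forcing the form $x_1^2+x_1x_2+x_2^2$ to be isotropic, and the verification that $y=0$ is the \emph{only} value with a repeated preimage, which uses the characteristic-$2$ identity $g(\sqrt t)=0$. I expect the latter to be the main obstacle, since it is what rules out fibers of size $2$ away from $0$ and thereby lets $C$ determine $n_1$ and $n_3$. Once these are established the remaining computation is routine, and it in fact delivers the equality $|I_f(t)|=(2q+1)/3$ for all $t\ne0$, showing the stated bound to be sharp.
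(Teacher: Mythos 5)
Your proof is correct, and it takes a genuinely different route from the paper's. The paper works locally: it sets up the equivalence relation $x\sim y\iff x^3+tx=y^3+ty$, reduces membership in a class to solvability of the quadratic $y^2+xy+x^2=t$, and invokes the characteristic-$2$ trace criterion (the number of roots of $\alp y^2+\bet y+\gam$ is governed by $\tr(\alp\gam/\bet^2)$) to show each $x\notin\{0,\sqrt t\}$ lies in a class of size $1$ or $3$ according to $\tr\big((x^2+t)/x^2\big)$; the hypothesis that $q$ is an even power of $2$ enters as $\tr(1)=0$, giving exactly $q/2-2$ elements in triple classes and the same count $(2q+1)/3$. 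You instead work globally: you count all ordered collisions at once by factoring the form $x_1^2+x_1x_2+x_2^2$ into linear forms over $\F_q$ (possible precisely because $3\mid q-1$ supplies a cube root of unity, which is where the even-power hypothesis enters for you), then pin down the fiber-size distribution using the unique critical value $y=0$ and the fact that a separable cubic has $0$, $1$, or $3$ roots in $\F_q$, and solve a small linear system. The two uses of the hypothesis ($\tr(1)=0$ versus isotropy of the form) are equivalent but packaged differently. Your approach is more elementary -- it avoids the trace lemma entirely -- and your scaling substitution $x=\sqrt t\,u$ showing $|I_f(t)|$ is independent of $t\ne0$ is a clean observation the paper does not make; on the other hand, the paper's trace machinery is what carries over to the harder odd-power case (Proposition~\refp{qodd}), where the analogous exponential sum requires Weil's bound, so the local method is the one that scales. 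One small point worth a line in a final write-up: the step ``no repeated root, hence size $1$ or $3$'' uses the standard fact that a cubic over $\F_q$ with two distinct roots in $\F_q$ has its third root in $\F_q$ as well. Like the paper's argument, your computation in fact gives equality $|I_f(t)|=(2q+1)/3$ for all $t\ne0$, so the bound is sharp in both treatments.
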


\begin{proposition}\label{p:qodd}
Suppose that $q$ is an odd power of $2$ and let
 $f(x):=x^{q-2}+x^2\ (x\in\F_q)$. Then for every $t\in\F_q$ we have
$|I_f(t)|\le2(q+\sqrt q+1)/3$.
\end{proposition}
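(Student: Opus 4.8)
The plan is to reduce the problem to counting collisions of the map $h_t\colon\F_q\to\F_q$, $h_t(x):=f(x)+tx$, whose image is exactly $I_f(t)$. Since $x^{q-2}=x^{-1}$ for $x\in\F_q^\times$ and $h_t(0)=0$, I would first record that for a value $v$ the nonzero preimages under $h_t$ are precisely the roots of the cubic $C_v(x):=x^3+tx^2+vx+1$ (obtained by clearing the denominator in $x^{-1}+x^2+tx=v$). As $C_v$ has constant term $1$, none of its roots is $0$, so each $v$ has at most three preimages in $\F_q^\times$; equivalently, $v$ lies in $h_t(\F_q^\times)$ if and only if $C_v$ is reducible. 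Writing $N(v)$ for the number of preimages of $v$ in $\F_q^\times$, we have $\sum_v N(v)=q-1$ and $N(v)\le 3$, and the elementary inequality $3\cdot\mathbf 1_{\{N(v)\ge1\}}+N(v)(N(v)-1)\le 3N(v)$ (valid for $0\le N(v)\le 3$) summed over $v$ gives $3\,|h_t(\F_q^\times)|+P\le 3(q-1)$, where $P$ is the number of ordered pairs $(x,y)$ of distinct nonzero elements with $h_t(x)=h_t(y)$. Since $|I_f(t)|\le|h_t(\F_q^\times)|+1$, this yields the key inequality $|I_f(t)|\le q-P/3$, and it remains to bound $P$ from below.

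Next I would compute the collision condition explicitly. For distinct nonzero $x,y$ one finds $h_t(x)=h_t(y)$ iff $(x+y)\big(\tfrac1{xy}+(x+y)+t\big)=0$, i.e.\ (since $x\ne y$) iff $\tfrac1{xy}+(x+y)+t=0$. Parametrising an unordered pair by $s:=x+y\ne0$ and $p:=xy$, this forces $p=1/(s+t)$ (so $s\ne t$), and $\{x,y\}$ is then the root set of $z^2+sz+p$, which splits into two distinct nonzero elements of $\F_q$ exactly when $\tr\!\big(1/(s^2(s+t))\big)=0$. Hence $P=2M$ with $M:=\#\{s\in\F_q\stm\{0,t\}:\tr(1/(s^2(s+t)))=0\}$, and writing $\psi(w):=(-1)^{\tr(w)}$ for the canonical additive character I obtain $M=\tfrac{q-2}2+\tfrac12\sum_{s\ne0,t}\psi\big(1/(s^2(s+t))\big)$.

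The heart of the argument is then a Weil-type estimate for this character sum, which I would read off the Artin--Schreier curve $y^2+y=1/(s^2(s+t))$: for $s\notin\{0,t\}$ the number of $y$ is $1+\psi(1/(s^2(s+t)))$, so the sum is governed by the number of $\F_q$-points of this curve. The pole at $s=0$ has even order $2$, but it can be reduced to order $1$ by subtracting $A^2+A$ with $A=t^{-1/2}/s$, which changes no summand because $\tr(A^2+A)=0$; after this reduction the curve has poles of order $1$ at $s=0$ and $s=t$, hence genus at most $1$, so the Hasse--Weil bound gives $\big|\sum_{s\ne0,t}\psi(1/(s^2(s+t)))\big|\le 2\sqrt q+O(1)$. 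This yields $M\ge \tfrac q2-\sqrt q-O(1)$, hence $P\ge q-2\sqrt q-O(1)$, and combined with $|I_f(t)|\le q-P/3$ it gives $|I_f(t)|\le \tfrac23\big(q+\sqrt q+O(1)\big)$, as required. The degenerate values $t=0$ and $t=1$ I would treat directly: for $t=0$ the relevant function is $1/s^3$, and since $q$ being an odd power of $2$ forces $3\nmid q-1$, cubing is a bijection of $\F_q^\times$, giving $M=q/2-1$ outright; for $t=1$ the pole at $s=0$ disappears after reduction and the curve becomes rational, only improving the estimate.

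The main obstacle is the character-sum estimate of the previous paragraph, specifically extracting the \emph{exact} constant. Hasse--Weil supplies only $2\sqrt q+O(1)$, and tracking the additive $O(1)$ terms---the contributions of the ramified fibres over $s=0,t$ and of the fibre over $s=\infty$ to the point count, together with the ambiguous ``$+1$'' coming from whether $0$ itself lies in $h_t(\F_q^\times)$---is precisely what must be done carefully to land on the stated bound $2(q+\sqrt q+1)/3$ rather than something slightly weaker. A secondary delicate point is the genus computation itself: the even-pole Artin--Schreier reduction must be justified, and the coefficient of $\sqrt q$ in the final bound hinges on the resulting claim of genus at most $1$.
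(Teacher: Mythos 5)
Your strategy is essentially the paper's own argument in different clothing: the paper counts equivalence classes of the relation $x\sim y\iff f(x)+tx=f(y)+ty$ rather than fibers of $h_t$, parametrizes a collision pair by one of its elements rather than by $s=x+y$ (obtaining the trace condition $\tr\big(1/(x(x+t)^2)\big)=0$ instead of your $\tr\big(1/(s^2(s+t))\big)=0$), performs the same even-pole reduction via $\tr(u^2)=\tr(u)$, and then applies a Weil-type bound; the cases $t=0$ and $t=1$ are treated separately there too, and your handling of them is correct. The problem is that, as written, your argument does not prove the stated inequality, for two concrete reasons.

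The first gap is the one you acknowledge: you leave the character-sum estimate as $2\sqrt q+O(1)$. The paper closes this by citing an explicit form of Weil's bound for rational-function arguments \cite[Theorem~2]{b:mm}, which for a function with two simple poles and a zero at infinity gives $\big|\sum_{s\ne 0,t}\psi(g(s))\big|\le 2\sqrt q+1$; this is exactly the bookkeeping you defer (one point over each ramified pole, two over $s=\infty$, genus $1$), and it is fixable along the lines you sketch. The second gap you do not appear to have noticed: even with the sharp constant $2\sqrt q+1$, your counting scheme cannot reach the claimed bound. From $M\ge\frac{q-2}{2}-\frac12(2\sqrt q+1)$ you get $P\ge q-2\sqrt q-3$, hence $|I_f(t)|\le q-P/3\le \frac{2(q+\sqrt q+1)}{3}+\frac13$, strictly weaker than the proposition; for instance, at $q=8$ your bound permits $|I_f(t)|=8$ while the proposition forbids it. The loss sits in your inequality $3\cdot\mathbf 1_{\{N(v)\ge1\}}+N(v)(N(v)-1)\le 3N(v)$, which has slack $1$ at $N(v)=2$. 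The missing observation---the key structural point of the paper's proof---is that for $t\notin\{0,1\}$ there is always a fiber of size exactly $2$: the solutions of $x^{-1}+x^2+tx=t^{-1}$ form the set $\{t,\,t^{-1/2}\}$, since the cubic $x^3+tx^2+t^{-1}x+1$ factors as $(x+t)(x+t^{-1/2})^2$. Accounting for this single fiber sharpens your count to $3\,|h_t(\F_q^\times)|+P\le 3(q-1)-1$, i.e.\ $|I_f(t)|\le q-(P+1)/3$, which recovers precisely the missing $\tfrac13$ and lands on $2(q+\sqrt q+1)/3$.
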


To complete the proof of Theorem~\reft{quadratic} it remains to prove
Propositions~\refp{qeven} and~\refp{qodd}. For this we need the following
well-known fact.
\begin{lemma}\label{l:qeq}
Suppose that $q$ is a power of $2$, and let $\tr$ denote the trace function
from the field $\F_q$ to its two-element subfield. For
$\alp,\bet,\gam\in\F_q$ with $\alp\ne 0$, the number of solutions of the
equation $\alp x^2+\bet x+\gam=0$ in the variable $x\in\F_q$ is
  $$ \begin{cases}
       1 &\text{ if $\bet=0$}, \\
       0 &\text{ if $\bet\ne 0$ and $\tr(\alp\gam/\bet^2)=1$}, \\
       2 &\text{ if $\bet\ne 0$ and $\tr(\alp\gam/\bet^2)=0$}.
     \end{cases} $$
\end{lemma}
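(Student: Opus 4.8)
The plan is to prove Lemma~\refl{qeq}, the statement about solutions of $\alp x^2+\bet x+\gam=0$ over a field $\F_q$ of characteristic $2$. This is a classical fact, and the natural approach is to reduce to the canonical quadratic $y^2+y=c$ by a linear change of variables, then invoke the standard characterization of its solvability via the trace. Since $\alp\ne 0$, I would first divide through to get $x^2+(\bet/\alp)x+(\gam/\alp)=0$.

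The case $\bet=0$ is immediate and should be dispatched first: in characteristic $2$ the Frobenius map $x\mapsto x^2$ is a bijection of $\F_q$, so $x^2=\gam/\alp$ has exactly one solution. For the main case $\bet\ne 0$, the idea is to substitute $x=(\bet/\alp)y$, which turns $\alp x^2+\bet x+\gam=0$ into a scalar multiple of $y^2+y+\alp\gam/\bet^2=0$, i.e.\ $y^2+y=c$ with $c:=\alp\gam/\bet^2$. The number of solutions is unchanged under this invertible substitution, so it suffices to count solutions of $y^2+y=c$.

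The key step is the standard trace criterion for the additive Artin--Schreier equation $y^2+y=c$ in characteristic $2$: this equation has a solution in $\F_q$ if and only if $\tr(c)=0$, and when a solution $y_0$ exists the full solution set is $\{y_0,y_0+1\}$, giving exactly two solutions. I would justify the criterion by noting that the map $\phi\colon y\mapsto y^2+y$ is an additive ($\F_2$-linear) homomorphism of $\F_q$ whose kernel is $\{0,1\}$, so its image is an index-$2$ subgroup; since $\tr\colon\F_q\to\F_2$ is a nonzero additive map with $\tr(y^2+y)=\tr(y^2)+\tr(y)=0$ (using $\tr(y^2)=\tr(y)$, as Frobenius fixes the trace), the image of $\phi$ is exactly $\ker\tr$. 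Hence $c$ is in the image precisely when $\tr(c)=0$, in which case the fibre has size $|\ker\phi|=2$, and otherwise the fibre is empty. Matching $\tr(c)=\tr(\alp\gam/\bet^2)$ against the two subcases then yields the stated counts of $0$ and $2$.

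I do not anticipate a genuine obstacle here, since every ingredient is elementary. The only point requiring a little care is confirming that the substitution $x=(\bet/\alp)y$ is a bijection of $\F_q$ (which holds as $\bet/\alp\ne 0$) so that solution counts transfer exactly, and recording the identity $\tr(y^2)=\tr(y)$ that makes $\tr\circ\phi$ vanish. With those in hand the three cases follow directly.
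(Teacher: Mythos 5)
Your proof is correct. Note that the paper itself gives no proof of Lemma~\refl{qeq} at all --- it is quoted as a ``well-known fact'' --- so there is no argument of the authors to compare against; what you have written out is the standard Artin--Schreier argument that they implicitly invoke. Each step is sound: the case $\bet=0$ follows since the Frobenius map $x\mapsto x^2$ is a bijection of $\F_q$; the substitution $x=(\bet/\alp)y$ is a bijection of $\F_q$ because $\bet/\alp\ne 0$, and it converts the equation into $y^2+y=\alp\gam/\bet^2$; and the image of the $\F_2$-linear map $y\mapsto y^2+y$ (kernel $\{0,1\}$, hence image of size $q/2$) is contained in $\ker\tr$ by $\tr(y^2)=\tr(y)$, and equals $\ker\tr$ because the trace is a nonzero $\F_2$-linear functional and so its kernel also has size $q/2$. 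This yields exactly the stated counts $1$, $0$, $2$ in the three cases.
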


\begin{proof}[Proof of Proposition~\refp{qeven}]
The assumption that $q$ is an even power of $2$ implies  that $q-1$ is
divisible by $3$. Consequently, $\F_q$ contains $(q-1)/3+1<(2q+1)/3$ cubes,
and we assume below that $t\ne 0$.

For $x,y\in\F_q$ we write $x\sim y$ if $x^3+ t x=y^3+ t y$. Clearly, this
defines an equivalence relation on $\F_q$, and $|I_f( t)|$ is just the number
of equivalence classes. Since the equation $x^3+ t x=0$ has exactly two
solutions, which are $0$ and $\sqrt t$, the set $\{0,\sqrt t\}$ is an
equivalence class. Fix now $x\notin\{0,\sqrt t\}$ and consider the
equivalence class of $x$. For $x\sim y$ to hold it is necessary and
sufficient that either $y^2+xy+x^2= t$, or $x=y$, and these two conditions
cannot hold simultaneously in view of $x\ne\sqrt t$. Hence, with $\tr$
defined as in Lemma~\refl{qeq}, and using the assertion of the lemma, the
number of elements in the equivalence class of $x$ is
  $$ \begin{cases}
       1 &\text{if $\tr((x^2+ t)/x^2)=1$}, \\
       3 &\text{if $\tr((x^2+ t)/x^2)=0$}.
     \end{cases} $$
As $x$ runs over all elements of $\F_q\stm\{0,\sqrt t\}$, the expression
$(x^2+ t)/x^2$ runs over all elements of $\F_q\stm\{0,1\}$. Since $q$ is an
even power of $2$, we have $\tr(1)=\tr(0)=0$; thus, there are $q/2-2$ values
of $x\notin\{0,\sqrt t\}$ with $\tr((x^2+ t)/x^2)=0$.

To summarize, $q/2-2$ elements of $\F_q$ are contained in three-element
equivalence classes, the elements $0$ and $\sqrt t$ form a two-element class,
and the remaining $q/2$ elements lie in one-element classes. It follows that
the number of classes is
  $$ \frac{q/2-2}3 + 1 + q/2 = \frac{2q+1}3. $$
\end{proof}

\begin{proof}[Proof of Proposition~\refp{qodd}]
We define the equivalence relation $\sim$ and the trace function $\tr$ on
$\F_q$ as in the proof of Proposition~\refp{qeven}. Notice, that the
assumption that $q$ is an odd power of $2$ implies that $q-1$ is not
divisible by $3$, whence the cube function $x\mapsto x^3$ is a bijection of
$\F_q$ onto itself. Furthermore, we have $x^{q-2}=x^{-1}$ for
$x\in\F_q^\times$, implying
  $$ I_f( t) = \{x^{-1}+x^2+ t x\colon x\in\F_q^\times \} \cup \{0\}. $$

Suppose first that $ t=0$, in which case
  $$ I_f(0)=\{x^{-1}+x^2\colon x\in\F_q^\times\} $$
in view of $1^{-1}+1^2=0$. As simple computation shows that $x\sim y$ with
$x,y\in\F_q^\times,\ x\ne y$ holds if and only if $1/(xy)=x+y$; that is,
$xy^2+x^2y+1=0$. For $x\in\F_q^\times$ fixed, this equation in $y$ has, by
Lemma~\refl{qeq}, two (non-zero) solutions is $\tr(1/x^3)=0$, and no
solutions if $\tr(1/x^3)=1$. It follows that each $x\in\F_q^\times$ contains
either three, or one non-zero element in its equivalence class, according to
whether $\tr(1/x^3)=0$ or $\tr(1/x^3)=1$. By a remark at the beginning of the
proof, as $x$ runs over all elements of $\F_q^\times$, so does $1/x^3$.
Hence, there are exactly $q/2-1$ those $x\in\F_q^\times$ with $\tr(1/x^3)=0$,
and $q/2$ those $x\in\F_q^\times$ with $\tr(1/x^3)=1$. Consequently,
$|I_f(0)|$, which is the number of equivalence classes, is equal to
  $$ \frac{q/2-1}3 + q/2 = \frac{2q-1}3. $$

For the rest of the proof we assume that $ t\ne 0$.

The equation $x^{-1}+x^2+ t x= t^{-1}$ is easily seen to have the solution
set $\{ t,1/\sqrt t\}$ which, therefore, is an equivalence class, consisting
of two elements if $ t\ne 1$ or just one element if $ t=1$. Fix
$x\in\F_q^\times\stm\{ t,1/\sqrt t\}$. For $y\in\F_q^\times,\ y\neq x$, we
have $x\sim y$ if and only if $1/(xy)=x+y+ t$; equivalently, $xy^2+x(x+
t)y+1=0$. This equation has two solutions (distinct from $x$ and $0$) if
$\tr(1/x(x+ t)^2)=0$, and no solutions if $\tr(1/x(x+ t)^2)=1$. In the former
case the equivalence class of $x$ contains three non-zero elements, and,
consequently, if we let
  $$ N := \# \big\{ x\in \F_q^\times\stm\{ t,1/\sqrt t\}
                                 \colon \tr\big(1/(x(x+ t)^2)\big)=0 \big\}, $$
then
\begin{equation}\label{e:Iflam}
  |I_f( t)| \le
        \begin{cases}
          q - \frac23\,N     &\text{ if $ t=1$}, \\
          q - \frac23\,N - 1 &\text{ if $ t\ne 1$}.
        \end{cases}
\end{equation}
To estimate $N$ we notice that
  $$ \frac1{x(x+ t)^2} = \frac1{ t^2x} + \frac1{ t^2(x+ t)}
                                                 + \frac1{ t(x+ t)^2}, $$
and that
  $$ \tr\left(\frac1{ t(x+ t)^2}\right)
                               = \tr\left(\frac1{\sqrt t(x+ t)}\right), $$
implying
\begin{align*}
  \tr\left(\frac1{x(x+ t)^2}\right)
       &= \tr\left(\frac1{ t^2x}
           + \left(\frac1{ t^2}+\frac1{\sqrt t}\right)
                                                    \frac1{x+ t} \right) \\
       &= \tr \left( \frac{x/\sqrt t+1/ t}{x(x+ t)} \right).
\end{align*}
Thus, if $ t=1$, then
  $$ \tr\left(\frac1{x(x+ t)^2}\right) = \tr\left(\frac1x\right), $$
showing that
  $$ N = \# \{x\in\F_q\stm\{0,1\}\colon \tr(1/x)=0 \} = q/2-1 $$
(as the assumption that $q$ is an odd power of $2$ implies $\tr(1)=1$), and
hence
  $$ |I_f(1)| \le q - \frac23\,(q/2-1) = \frac{2q+2}3 $$
by \refe{Iflam}.

Finally, suppose that $ t\notin\{0,1\}$. For brevity we write
  $$ R(x) := \frac{x/\sqrt t+1/ t}{x(x+ t)}, $$
and let $\psi$ denote the additive character of the field $\F_q$, defined by
  $$ \psi(x) = (-1)^{\tr(x)};\ x\in\F_q. $$
Since $R(1/\sqrt t)=0$, we have
\begin{align*}
  N &= \frac12 \sum_{x\in\F_q\stm\{0, t,1/\sqrt t\}}
                                                \big(1+\psi(R(x))\big) \\
    &= \frac12 \sum_{x\in\F_q\stm\{0, t\}} \psi(R(x)) + \frac q2 - 2.
\end{align*}
Using Weil's bound (as laid out, for instance, in \cite[Theorem~2]{b:mm}), we
get
  $$ N \ge \frac q2-2 - \frac12\,(2\sqrt q+1)
                                          = \frac q2 - \sqrt q - \frac 52. $$
Now \refe{Iflam} gives
  $$ |I_f( t)| \le q - \frac23\,\big((q/2)-\sqrt q-(5/2)\big) - 1
              = \frac{2(q+\sqrt q+1)}3, $$
which completes the proof.
\end{proof}

We remark that for any particular prime power $q$ the estimates of
Propositions \refp{qeven} and \refp{qodd} can (potentially) be improved by
computing the exact values of the quantities $|I_f(t)|$. Say, a direct
inspection shows that for $q=8$ and $f(x):=x^6+x^2$ one has $|I_f(t)|\le 6$
for each $t\in\F_8$; consequently, for every integer $n\ge 1$ the vector
space $\F_8^n$ possesses a rank-$1$ Kakeya set of size smaller than
$\frac85\cdot6^n$.

A natural question arising in connection with our proof of
Theorem~\reft{quadratic} is whether and to which extent the result can be
improved by choosing ``better'' functions $f$ in Propositions~\refp{qeven}
and~\refp{qodd} and in the application of Lemma \refl{qeq} in the case of $q$
odd. We conclude this section showing that we have almost reached the limits
of the method.

\begin{lemma}
For every prime power $q$ and function $f\colon\F_q\to\F_q$, there exists an
element $t\in\F_q$ with
  $$ |I_f( t)| > q/2. $$
\end{lemma}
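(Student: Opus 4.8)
The plan is to prove the statement by an \emph{averaging argument over $t$}, combined with the Cauchy--Schwarz inequality, the latter converting a count of coincidences into a lower bound on image size. For each $t\in\F_q$ I set $g_t(x):=f(x)+tx$, so that $I_f(t)$ is exactly the image of $g_t$, and $|I_f(t)|$ is the number of distinct values taken by $g_t$ on $\F_q$. The idea is that on average $g_t$ cannot collapse too many inputs, so some $t$ must make the image large.

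First I would record the elementary inequality relating $|I_f(t)|$ to the coincidences of $g_t$. Writing
$$ C(t) := \#\{(x,y)\in\F_q^2\colon g_t(x)=g_t(y)\} = \sum_{v\in\F_q} |g_t^{-1}(v)|^2, $$
the Cauchy--Schwarz inequality applied to the fibre sizes gives $q^2=\big(\sum_v |g_t^{-1}(v)|\big)^2\le |I_f(t)|\cdot C(t)$, hence
$$ |I_f(t)| \ge \frac{q^2}{C(t)}. $$
Thus it suffices to exhibit a single $t$ for which $C(t)$ is small, namely $C(t)\le 2q-1$.

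Next I would sum $C(t)$ over all $t\in\F_q$. Observing that $g_t(x)=g_t(y)$ is equivalent to $f(x)-f(y)=t(y-x)$, I would split the pairs $(x,y)$ according to whether $x=y$. The $q$ diagonal pairs satisfy this for \emph{every} $t$, contributing $q\cdot q=q^2$ to $\sum_t C(t)$; each of the $q^2-q$ off-diagonal pairs has $y-x\ne 0$ and therefore pins down the unique value $t=(f(x)-f(y))/(y-x)$, contributing $q^2-q$ in total. Hence
$$ \sum_{t\in\F_q} C(t) = q^2 + (q^2-q) = 2q^2-q, $$
so the minimum of $C(t)$ over the $q$ values of $t$ is at most the average $(2q^2-q)/q = 2q-1$. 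For a $t$ attaining this minimum,
$$ |I_f(t)| \ge \frac{q^2}{2q-1} = \frac q2\cdot\frac{2q}{2q-1} > \frac q2, $$
as required.

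The argument has essentially no obstacle once the collision count $C(t)$ is identified as the quantity to track. The one step needing care is the claim that each off-diagonal pair $(x,y)$ contributes to $C(t)$ for exactly one $t$; this uses that the perturbation $tx$ is linear in $t$, and it is precisely what keeps $\sum_t C(t)$ of order $q^2$ rather than much larger. I would also note that $C(t)\ge q>0$ for every $t$ (the diagonal pairs alone give this), so the bound $|I_f(t)|\ge q^2/C(t)$ is always meaningful.
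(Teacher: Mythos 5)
Your proof is correct and is essentially the paper's own argument in different bookkeeping: the fibres of your $g_t$ are exactly the cliques of the paper's collision graph $\Gam_t$ (so that $C(t)=q+2e(\Gam_t)$), your observation that each off-diagonal pair determines a unique $t$ is the paper's key fact that every pair $x\ne y$ is an edge of exactly one $\Gam_t$, your averaging step is the paper's pigeonhole $e(\Gam_t)\le q^{-1}\binom q2$, and your Cauchy--Schwarz inequality is the paper's clique-convexity estimate. The one point where your version is smoother is that counting \emph{ordered} pairs with the diagonal included lets you skip the paper's separate treatment of even and odd $q$ and yields the clean uniform bound $|I_f(t)|\ge q^2/(2q-1)$, which matches the $n=1$, $r=1$ case of Theorem~1.
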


\begin{proof}
For $x,y, t\in\F_q$ we write $x\simlam y$ if $f(x)+ t x=f(y)+ t y$;
equivalently, if either $x=y$, or $x\ne y$ and $(f(x)-f(y))/(x-y)=- t$. It
follows from the first form of this definition that $\simlam$ is an
equivalence relation on $\F_q$ and $|I_f( t)|$ is the number of equivalence
classes, and from the second form that for every pair $(x,y)$ with $x\ne y$
there exists a unique $ t\in\F_q$ with $x\simlam y$.

For each $ t\in\F_q$, consider the graph $\Gam_ t$ on the vertex set $\F_q$,
in which two vertices $x\ne y$ are adjacent if and only if
 $x\simlam y$. By the remark just made, every edge of the complete graph on
the vertex set $\F_q$ belongs to exactly one graph $\Gam_ t$. Consequently,
there exists $ t\in\F_q$ such that the number of edges of $\Gam_ t$, which we
denote by $e(\Gam_ t)$, does not exceed $q^{-1}\binom q2=(q-1)/2$. By the
construction, the graph $\Gam_ t$ is a disjoint union of cliques; let $k$
denote the number, and $m_1\longc m_k$ the sizes of these cliques. Thus, we
have
  $$ m_1\longp m_k=q\quad\text{and}\quad |I_f( t)|=k, $$
and it remains to show that $k>q/2$. We distinguish two cases.

If $q$ is even then, using convexity, we get
  $$ \frac q2-1 \ge e(\Gam_ t) = \binom{m_1}2\longp\binom{m_k}2
        \ge k \binom{q/k}2 = \frac12\,q\lpr\frac qk-1\rpr, $$
whence
  $$ q-1 > \frac{q^2}{2k}, $$
leading to the desired bound.

If $q$ is odd, we let
  $$ s := \# \{ i\in[1,k]\colon m_i=1 \}
        \quad\text{and}\quad l := \# \{ i\in[1,k]\colon m_i\ge 2 \}, $$
so that $s+l=k$ and
\begin{equation}\label{e:hren28}
  s + 2l \le q.
\end{equation}
Then
\begin{multline*}
  \frac{q-1}2 \ge e(\Gam_ t)
    = \sum_{i\in[1,k]\colon m_i\ge 2}\binom{m_i}2 \ge l \binom{(q-s)/l}2 \\
        = \frac12\,(q-s)\lpr \frac{q-s}l - 1 \rpr = \frac1{2l}\,(q-s)(q-k).
\end{multline*}
If we had $k\le q/2$, this would yield
  $$ \frac q2 > \frac{q-1}2 \ge \frac1{2l}\,(q-s)\cdot\frac q2, $$
contradicting \refe{hren28}.
\end{proof}

\section{Proof of Theorems \reft{missing-digit} and \reft{random-rotations}.}
  \label{s:proofs-mdrr}

\begin{proof}[Proof of Theorem \reft{missing-digit}]
Given a vector $d=\eps_1e_1\longp\eps_ne_n$ with $\eps_1\longc\eps_n\in\F_q$,
let
  $$ b := \sum_{i\in[1,n]\colon \eps_i=0} e_i. $$
Thus, $b\in B$, and it is readily verified that for $t\in\F_q^\times$ we have
$b+td\in A$. Therefore, the line through $b$ in the direction $d$ is entirely
contained in $K$.

The assertion on the size of $K$ follows from $A\cap B=\{e_1\longp e_n\}$.
\end{proof}

\begin{proof}[Proof of Theorem \reft{random-rotations}]
We notice that the assertion is trivial if $n=O(q(\ln q)^3)$, as in this case
for a sufficiently large constant $C$ we have
  $$ \Big(\frac q{2^{2/q}}\Big)^{n+C\sqrt{n\ln q/q}} > q^n; $$
consequently, we assume
\begin{equation}\label{e:n-large1}
  n > 32 q(\ln q)^3
\end{equation}
for the rest of the proof.

Fix a linear basis $\{e_1\longc e_n\}\seq\F_q^n$ and, as in
Theorem~\reft{missing-digit}, let
\begin{align*}
  A &:= \{ \eps_1e_1\longp\eps_n e_n
                                \colon \eps_1\longc\eps_n \in \F_q^\times \}
\intertext{and}
  B &:= \{ \eps_1e_1\longp\eps_n e_n
                         \colon \eps_1\longc\eps_n \in \{0,1\} \}.
\end{align*}
Given a vector $v=\eps_1e_1\longp\eps_ne_n$ with $\eps_1\longc\eps_n\in\F_q$
and a scalar $\eps\in\F_q$, let $\nu_\eps(v)$ denote the number of those
indices $i\in[1,n]$ with $\eps_i=\eps$. Set $\del:=2\sqrt{\ln q}$ and define
  $$ D_0 := \{ d \in \F_q^n \colon \nu_\eps(d) > n/q - \del(n/q)^{1/2}
   \ \text{for all}\ \eps\in\F_q\} $$
and
  $$ A_0 := \{ a\in A\colon \nu_1(a) > 2n/q - 2\del(n/q)^{1/2} \}. $$

Suppose that a vector $v\in\F_q^r$ is chosen at random, with equal
probability for each vector to be chosen. For each fixed $\eps\in\F_q$, the
quantity $\nu_\eps(v)$ is then a random variable, distributed binomially with
the parameters $n$ and $1/q$. As a result, using standard estimates for the
binomial tail (as, for instance, \cite[Theorem~A.1.13]{b:as}), we get
  $$ \Prob\big(\nu_\eps(v)\le n/q-\del(nq)^{1/2})\big)
                              \le e^{-\del^2(n/q)/(2n/q)} = \frac1{q^2}. $$
Consequently, the probability of a vector, randomly drawn from $\F_q^n$, not
to belong to $D_0$, is at most $1/q$, for which reason we call the elements
of $D_0$ \emph{popular directions}.

If $d=\eps_1e_1\longp\eps_ne_n\in D_0$ then, letting
$b:=\sum_{i\in[1,n]\colon\eps_i=0} e_i$, for each $t\in\F_q^\times$ we have
  $$ \nu_1(b+td) = \nu_0(d) + \nu_{t^{-1}}(d) > 2n/q-2\del(n/q)^{1/2}, $$
whence $b+td\in A_0$. Thus, the set $K_0:=B\cup A_0$ contains a line in every
popular direction.

To estimate the size of $K_0$ we notice that, letting
 $N:=\lfl 2n/q-2\del(n/q)^{1/2}\rfl+1$, we have
  $$ |A_0| = \sum_{j=N}^n \binom{n}j (q-2)^{n-j}. $$
Assumption \refe{n-large1} implies that the summands in the right-hand
side decay as $j$ grows, whence
  $$ |A_0| \le n \binom nN (q-2)^{n-N}. $$
Consequently, writing
 $$ H(x) := x\ln(1/x)+(1-x)\ln(1/(1-x)),\quad x\in(0,1) $$
and using a well-known estimate for the binomial coefficients, we get
  $$ |A_0| < n\exp(nH(N/n)+(n-N)\ln(q-2)). $$
Now, in view of \refe{n-large1} we have
  $$ \frac 1q \le \frac Nn\le \frac 2q \le 1-\frac1q, $$
and therefore, since $H(x)$ is concave and symmetric around the point
$x=1/2$, using \refe{n-large1} once again, from the mean value theorem we
derive
\begin{align*}
  H(N/n) - H(2/q) &= O\big((N/n-2/q)\,H'(1/q)\big) \\
                  &= O\big((\ln q/(nq))^{1/2}\,H'(1/q)\big) \\
                  &= O\big((\ln q)^{3/2}/(nq)^{1/2}\big).
\end{align*}
Hence
\begin{align*}
  nH(N/n)+(n-&N)\ln(q-2) \\
    &= nH(2/q)+n(1-2/q)\ln(q-2) + O\big((n/q)^{1/2}(\ln q)^{3/2}\big) \\
    &= n \Big( \ln q - \frac2q\,\ln 2 \Big)
         + O\big((n/q)^{1/2}(\ln q)^{3/2}\big),
\end{align*}
implying
  $$ |A_0| < \Big( \frac q{2^{2/q}} \Big)^n
                  \exp\big( O\big((n/q)^{1/2}(\ln q)^{3/2}\big) \big). $$
Since $q/2^{2/q}>2$ for $q\ge 3$, we conclude that
  $$ |K_0| \le |A_0|+|B| < \Big( \frac q{2^{2/q}} \Big)^n
                 \exp\big( O\big((n/q)^{1/2}(\ln q)^{3/2}\big) \big)
           = \Big(\frac q{2^{2/q}}\Big)^{n+O\big(\sqrt{n\ln q/q}\big)}. $$

We now use the random rotation trick to replace $K_0$ with a slightly larger
set $K$ containing lines in \emph{all} (not only \emph{popular}) directions.
To this end we chose at random linear automorphisms
 $T_1\longc T_n$ of the vector space $\F_q^n$ and set
  $$ K := T_1(K_0)\cup\dotsb\cup T_n(K_0). $$
Thus, $K$ contains a line in every direction from the set
  $$ D := T_1(D_0)\cup\dotsb\cup T_n(D_0). $$
Choosing a vector $d\in\F_q^n\stm\{0\}$ at random, for each fixed $j\in[1,n]$
the probability that $d\notin T_j(D_0)$ is at most $1/q$, whence the
probability that $d\notin D$ is at most $q^{-n}$. Hence, the probability that
$D\neq\F_q^n\stm\{0\}$ is smaller than $1$, showing that $T_1\longc T_n$ can
be instantiated so that $K$ is a rank-$1$ Kakeya set. It remains to notice
that $|K|\le n|K_0|$.
\end{proof}

\section{Proof of Lemma \refl{universal}.}\label{s:proofs-us}

If $k>n$, then the assertion of the lemma is trivial; suppose, therefore,
that $k\le n$, and let then $m:=\lfl n/k\rfl$. Fix a decomposition
 $\F_q^n=V_0\oplus V_1\oplus\dotsb\oplus V_k$, where $V_0,V_1\longc V_k\le\F_q^n$
are subspaces with $\dim V_i=m$ for $i=1\longc k$, and for each $i\in[0,k]$
let $\pi_i$ denote the projection of $\F_q^n$ onto $V_i$ along the remainder
of the direct sum; thus, $v=\pi_0(v)+\pi_1(v)\longp\pi_k(v)$ for every vector
$v\in\F_q^n$. Finally, let
  $$ U := \{ u\in\F_q^n \colon \pi_i(u)=0
                         \text{ for at least one index } 1\le i\le k \}. $$

A simple computation confirms that the size of $U$ is as claimed. To see why
$U$ contains a translate of every $k$-element subset of $\F_q^n$, given such
a subset $\{a_1\longc a_k\}$ we let $b:=-\pi_1(a_1)-\dotsb-\pi_k(a_k)$ and
observe that, for each $i\in[1,k]$,
  $$ \pi_i(b+a_i) = \pi_i(b) + \pi_i(a_i) = 0, $$
whence $b+a_i\in U$.$\hfill\Box$

\section{Conclusion.}\label{s:conclusion}
For a vector space $V$ and non-negative integer $r\le\dim V$, we defined
\emph{Kakeya sets of rank $r$ in $V$} as those subsets of $V$, containing a
translate of every $r$-dimensional subspace. In the case where $V$ is finite,
we established a lower bound and a number of upper bounds for the smallest
possible size of such sets. Our bounds are close to best possible in the case
where $r$ is bounded and the dimension $\dim V$ does not grow ``too fast''.
They are reasonably tight if $r=1$ and $\dim V$ grows, particularly if $q$ is
odd and not ``too small''. In the case where $\dim V$ grows and $r\ge 2$,
there is no reason to believe our bounds to be sharp; indeed, for $r\gtrsim
q/\log q$ our best upper bound results from a universal set construction
which completely ignores linearity.

Of possible improvements and research directions, the following two seem of
particular interest to us. First, it would be nice to beat the universal set
construction in the regime just mentioned ($\dim V$ grows and $r\ge 2$), or
to show that it produces an essentially best possible bound. Even the case
$q=r=2$ seems non-trivial: we do not know any construction of Kakeya sets of
rank $2$ in $\F_2^n$ of size smaller than $O(2^{3n/4})$, the bound supplied
by $4$-universal sets. The second direction stems from the fact that the
product of Kakeya sets of rank $r$ is a Kakeya set of rank $r$ in the product
space. It is not difficult to derive that, with $\kap_q^{(n)}(r)$ denoting
the smallest possible size of a Kakeya set of rank $r$ in $\F_q^n$, the limit
$\lim_{n\to\infty} \frac1n\,\ln\kap_q^{(n)}(r)$ exists for any fixed $q$ and
$r$. It would be very interesting to find this limit explicitly, even for
just one particular pair $(q,r)\ne(2,1)$. Arguably, most intriguing is the
first non-trivial case $q=3,\ r=1$, due to the fact that lines in $\F_3^r$
are three-term arithmetic progressions.

\section*{Appendix: proof of the lifting lemma.}

We prove here the following lemma, which is a slight extension of
Lemma~\refl{EOTlifting}.
\begin{lemma}\label{t:eot}
Let $n\ge r\ge r_1\ge 1$ be integers and $\F$ a field. Suppose that $K_1$ is
a Kakeya set of rank $r_1$ in $\F^{n-(r-r_1)}$, considered as a subspace of
$\F^n$, and let $K:=K_1\cup(\F^n\stm\F^{n-(r-r_1)})$. Then $K$ is a Kakeya
set of rank $r$ in $\F^n$.
\end{lemma}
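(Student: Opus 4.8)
The plan is to show directly that for every $r$-dimensional subspace $L\le\F^n$ there is a translate $v+L\seq K$. Write $m:=n-(r-r_1)$, so that $\F^m$ is the ambient space of $K_1$, and fix the complement $W\le\F^n$ spanned by the last $r-r_1$ coordinates, so that $\F^n=\F^m\oplus W$ and $\dim W=r-r_1$; let $\pi\colon\F^n\to W$ be the projection along $\F^m$. The key reduction is that a point $v+\ell$ with $\ell\in L$ can fail to belong to $K$ only when it lands in $\F^m$, since every point of $\F^n\stm\F^m$ already lies in $K$. Thus it suffices to choose $v$ so that the ``dangerous'' intersection $(v+L)\cap\F^m$ is contained in $K_1$.

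Set $L_1:=L\cap\F^m=\ker(\pi|_L)$. The dimension identity $\dim L_1=\dim L-\dim\pi(L)=r-\dim\pi(L)$ together with $\dim\pi(L)\le\dim W=r-r_1$ gives $\dim L_1\ge r_1$, with equality exactly when $\pi(L)=W$; this dichotomy organizes the argument. If $\pi(L)\subsetneq W$, I pick $v$ with $\pi(v)\notin\pi(L)$, which is possible as $\pi(L)$ is then a proper subspace; since $-\pi(\ell)\in\pi(L)$ for all $\ell$, we get $\pi(v+\ell)=\pi(v)+\pi(\ell)\ne0$, so $v+L$ avoids $\F^m$ entirely and lies in $K$. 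If instead $\pi(L)=W$, then $L_1$ is an $r_1$-dimensional subspace of $\F^m$, so the rank-$r_1$ Kakeya property of $K_1$ furnishes $b\in\F^m$ with $b+L_1\seq K_1$; taking $v:=b$, the points $v+\ell$ with $\ell\in L_1$ exhaust $b+L_1\seq K_1$, while for $\ell\in L\stm L_1$ we have $\pi(v+\ell)=\pi(\ell)\ne0$, placing $v+\ell$ in $\F^n\stm\F^m$. In both cases $v+L\seq K$.

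The main obstacle is precisely that $(v+L)\cap\F^m$ is a translate of $L_1$, whose dimension may exceed $r_1$, whereas a rank-$r_1$ Kakeya set only guarantees translates of $r_1$-dimensional subspaces; one therefore cannot always bury $(v+L)\cap\F^m$ inside $K_1$. The dimension count above is what resolves this: whenever $\dim L_1>r_1$ one automatically has $\pi(L)\ne W$, and in that regime $L$ can be translated off $\F^m$ completely, so $K_1$ need not be invoked at all. Only in the boundary case $\dim L_1=r_1$ does the Kakeya property of $K_1$ actually get used, and there the single choice $v=b$ simultaneously settles the $\F^m$-part and the off-$\F^m$ part. Confirming this split, and that $v=b$ does double duty, is essentially the only genuine content of the proof.
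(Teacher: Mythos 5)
Your proof is correct and follows essentially the same route as the paper: your dichotomy $\pi(L)=W$ versus $\pi(L)\subsetneq W$ is exactly the paper's dichotomy $L+\F^{n-(r-r_1)}=\F^n$ versus proper (since $L+\F^{n-(r-r_1)}=\{x\colon \pi(x)\in\pi(L)\}$), and in each case you choose the same translate: one avoiding $\F^{n-(r-r_1)}$ entirely, or one through the point $b$ supplied by the rank-$r_1$ Kakeya property of $K_1$ applied to $L\cap\F^{n-(r-r_1)}$. Your write-up is in fact somewhat more explicit than the paper's terse version of the same argument.
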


\begin{proof}
Suppose that $L\le\F^n$ is a subspace with $\dim L=r$. From
  $$ \dim L + \dim \F^{n-(r-r_1)}
                 = \dim(L+\F^{n-(r-r_1)}) + \dim(L\cap\F^{n-(r-r_1)}) $$
it follows that either $L+\F^{n-(r-r_1)}$ is a proper subspace of $\F^n$, or
$\dim(L\cap\F^{n-(r-r_1)})=r_1$. Observing that if $v\notin
L+\F^{n-(r-r_1)}$, then $v+L$ is disjoint with $\F^{n-(r-r_1)}$, we conclude
that, in either case, there is a translate of $L$, intersecting
$\F^{n-(r-r_1)}$ by a subset of a $r_1$-dimensional subspace. Hence, there is
also a translate of $L$, the intersection of which with $\F^{n-(r-r_1)}$ is
contained in $K_1$. By the construction, this translate of $L$ is contained
in $K$.
\end{proof}


\bigskip

\end{document}